\title{\vspace{-2cm}
Quasineutral limit, dispersion and oscillations for Korteweg type fluids
}
\author{\textbf{Donatella Donatelli}\\
        {\small Department of Information Engineering, Computer Science and Mathematics}\\
       {\small University of L'Aquila}\\
       {\small 67100 L'Aquila, Italy}\\
        $\scriptstyle\mathtt{donatella.donatelli@univaq.it}$\\
 \and        
  \textbf{Marcati Pierangelo}\\
        {\small Department of Information Engineering, Computer Science and Mathematics}\\
       {\small University of L'Aquila}\\
       {\small and GSSI - Gran Sasso Science Institute}\\
       {\small 67100 L'Aquila, Italy}\\
        $\scriptstyle\mathtt{pierangelo.marcati@univaq.it}$\\}
         \date{}
\newcommand{\e}{\varepsilon}		       
\newcommand{\R}{\mathbb{R}}
\newcommand{\ut}{\tilde{u}}
\newcommand{\st}{\tilde{\sigma}}
\newcommand{\Vt}{\tilde{\Phi}}
\newcommand{\utt}{\tilde{\ut}}
\newcommand{\stt}{\tilde{\st}}
\newcommand{\Vtt}{\tilde{\Vt}}
\newcommand{\rt}{\tilde{\rho}}
\newcommand{\rtt}{\tilde{\rt}}
\newcommand{\se}{\sigma^{\varepsilon}}
\newcommand{\la}{\lambda}
\newcommand{\rl}{\rho^{\lambda}}
\newcommand{\sls}{\sigma^{\lambda}}
\newcommand{\vl}{\Phi^{\lambda}}
\newcommand{\ul}{u^{\lambda}}
\newcommand{\el}{E^{\lambda}}
\newcommand{\dive}{\mathop{\mathrm {div}}}
\newtheorem{theorem}{Theorem}[section]%Theorem 1.1, Lemma 1.2  
\newtheorem{Main}{Main Theorem}[]
\newtheorem{lemma}[theorem]{Lemma}
\newtheorem{proposition}[theorem]{Proposition}
\theoremstyle{definition}
\newtheorem{definition}[theorem]{Definition}
\newtheorem{remark}[theorem]{Remark}
\begin{document}
\maketitle
\begin{abstract}
In the setting of general initial data and whole space we perform a rigorous   analysis of  the  quasineutral limit for a hydrodynamical model of a viscous plasma with capillarity tensor represented by the Navier Stokes  Korteweg Poisson system. We shall provide a detailed mathematical description  of the convergence process by analyzing the dispersion of the fast oscillating acoustic waves. However the standard acoustic wave analysis is not sufficient to control the high frequency oscillations in the electric field but it is necessary to estimates the dispersive properties induced by the capillarity terms. Therefore by using these additional estimates we will be able to control, via compensated compactness, the quadratic nonlinearity of the stiff electric force field. In conclusion, opposite to the zero capillarity case \cite{DM12} where persistent space localized time high frequency oscillations need to be taken into account,  we show that as $\la\to 0$, the density fluctuation $\rl-1$  converges strongly to zero and the fluids behaves according to an incompressible dynamics. 
\medbreak 
\textbf{Key words and phrases:} 
compressible and incompressible Navier Stokes equation, Korteweg type fluids, energy 
estimates, dispersive equations and estimates, acoustic equation.
\medbreak
\textbf{1991 Mathematics Subject Classification.} Primary 35L65; Secondary
35L40, 76R50.
\end{abstract}
\newpage
\tableofcontents
%\addcontenttocschapter
\newpage
\section{Introduction and plan of the paper}
\subsection{Introduction}
In the last years hydrodynamical models have been widely used to  describe physical phenomena in plasma physics. In the particular case where the  viscous stress tensors are taken into consideration the most simple model is provided by the coupling of the compressible  Navier Stokes equations with  the Poisson equation. In this case, in dimensionless units, the coupling  can be expressed in terms of a constant  $\la$ which represents the scaled Debye length, a characteristic physical parameter for plasmas related to the phenomenon of the so called ``Debye shielding'', \cite{GR95}. Moreover, if one wants to take into consideration  the surface tension effects it is necessary to add to the momentum equation a capillarity tensor, namely one has to consider Korteweg type model of capillarity. This type of models were first introduced by Korteweg \cite{K1901}, see also \cite{M52} and derived rigorously by Dunn and Serrin \cite{DS85} and are based on an extended version of thermodynamics which assumes that the energy of the fluid not only depends on standard variables but also on the gradient of the density. Finally the model we will consider  in this paper is given by the following Navier-Stokes -Poisson Korteweg system in $3-D$, namely
\begin{equation}
\partial_{t}{\rho^{\lambda}}+\dive(\rho^{\lambda} u^{\lambda})=0, \label{1} 
\end{equation}
\begin{equation}
\partial_{t}(\rho^{\lambda} u^{\lambda})+\dive(\rho^{\lambda} u^{\lambda}\otimes u^{\lambda})+\nabla p(\rl)^{\gamma}=\dive(\mu\rho^{\lambda}D(u^{\lambda})+K(\rho^{\lambda}))+\rho^{\lambda}\nabla \Phi^{\lambda}, \label{2}
\end{equation}
\begin{equation}
 \lambda^2 \Delta \Phi^{\lambda} = \rho^{\lambda} - 1, \label{3}
\end{equation}
where $p(\rho^{\lambda})$ denotes the pressure term,
$$p(\rl)=(\rho^{\lambda})^{\gamma}, \qquad \gamma\geq 3/2,$$
 $K$  the capillarity tensor which is given by
\begin{equation}
K_{ij}(\rl)=\frac{\kappa}{2}(\Delta (\rl)^{2}-|\nabla \rl|^{2})\delta_{ij}-\kappa\partial_{i}\rl\partial_{j}\rl
\end{equation}
and $D(\ul)$ the strain tensor which has the form
\begin{equation}
D(\ul)_{ij}=\frac{\partial_{i}\ul+\partial_{j}\ul}{2}.
\end{equation}
Let $x\in\R^{3}$, $t\geq 0$, we denote by $\rl(x,t)$ the {\em negative charge density}, by $m(x,t)=\rl(x,t)\ul(x,t)$ the {\em current density},  by $\ul(x,t)$ the {\em velocity field}, by $\vl(x,t)$ the {\em electrostatic potential}, $\mu$ the {\em shear viscosity}.
The parameter $\lambda$ is the so called {\em Debye length} (up to a constant factor), $\kappa$ is the capillarity coefficient. Moreover let us observe that
 $$\dive K(\rho^{\lambda})=\kappa\rl\nabla\Delta \rl.$$
The purpose of this paper is to perform a rigorous limiting analysis when $\la\to 0$. The physical meaning of the Debye length $\lambda$ is  the distance over which the usual Coulomb field is killed off exponentially by the polarization of the plasma. In terms of physical variables the Debye length can be expressed as
\begin{equation}
\la=\la_{D}/L  \qquad \la_{D}=\sqrt{\frac{\e_{0}k_{B}T}{e^{2}n_{0}}},
\end{equation}
where $L$ is the macroscopic length scale, $\e_{0}$ is the vacuum permittivity, $k_{B}$ the Boltzmann constant, $T$ the average plasma temperature, $e$ the absolute electron charge and $n_{0}$ the average plasma density.  In many cases the Debye length is very small compared to the macroscopic length $\la_{D} << L$ and so it makes sense to consider the quasineutral limit $\la \to 0$ of the system \eqref{1}-\eqref{3}. In this situation the particle density is constrained to be close to the background density (equal to one in our case) of the oppositely charged particle. The limit $\la \to 0$ is called the quasineutral limit since the charge density almost vanishes identically. The velocity of the fluid then evolves according to an incompressible flow.

In the last years the  quasineutral limit  for hydrodynamical models of plasma or semiconductor devices has been widely studied by many authors, in the case of Euler Poisson system for instance by  \cite{CG00},  \cite{CDMS96}, \cite{L05},  \cite{PWY06} or  the case of the Navier Stokes Poisson system by \cite{W04}	who studied the quasineutral limit for the smooth solution with well-prepared initial data. Jiang and Wang   \cite{JW06} studied the combined quasineutral and inviscid limit of the compressible Navier- Stokes-Poisson system for weak solution and obtained the convergence of Navier- Stokes-Poisson system to the incompressible Euler equations with general initial data. Moreover in  \cite{JW06} the vanishing of viscosity coefficient was required in order to take the quasineutral limit and no convergence rate was derived there. The paper \cite{JLW08} studied the quasineutral limit of the isentropic Navier-Stokes-Poisson system both in the whole space and in the torus without restrictions on the viscous coefficients, with well prepared initial data. 

The authors in  \cite{DM08}  investigated the quasineutral limit of the isentropic Navier-Stokes-Poisson system in the whole space  and obtained the convergence of weak solution of the Navier-Stokes-Poisson system to the weak solution of the incompressible Navier- Stokes equations by means of dispersive estimates of Strichartz's type under the assumption that the Mach number is related to the Debye length. A more general analysis  in the context of weak solutions and in framework of general initial data was performed by the authors in \cite{DM12} where all  the regularity and smallness assumptions of the previous paper were removed. They were able to provide a detailed mathematical description of the convergence process by using microlocal defect measures and by developing an explicit correctors analysis. 

Finally,  in the contest of combined quasineutral and relaxation time limit we have the papers by Gasser and Marcati in \cite{GM01a, GM01b, GM03}. Other similar limits have been investigated in \cite{CDM13}, \cite{DFN10}, \cite{DoFeNo12}.

As far as it concerns the quasineutral limit for the compressible Navier Stokes Poisson Korteweg system  we refer to \cite{LY14}  for the $H^{s}$   setting of strong solutions  and we refer  to \cite{BDD05} for the quasineutral limit in a periodic domain,  where the electrons are assumed to be thermalized and to follow a non dimensional Maxwell-Boltzmann distribution $(\rl=e^{\vl})$.

In all of these papers but \cite{DM12}, the assumptions are designed to kill the presence of high frequency oscillations of the electric force fields. In this paper we are interested to understand the limiting behaviour in the same general situation of \cite{DM12}, when a Korteweg tensor is added.

In this paper we perform the zero Debye limit  for the Navier Stokes Poisson Korteweg system in the whole space and in the framework of weak solutions and large non smooth  initial data.  We do not make any particular assumption and our methods will control the quadratic stiff term of the electric fields by a better understanding of the role of the various scales in the oscillating wave packets.

A common feature for the limit analysis in the case of ill prepared initial data is the high plasma oscillations, namely the presence of high frequency time oscillations of the acoustic waves, moreover  what actually makes the limiting behavior analysis very hard is the presence of very stiff terms due to the electric field, whose oscillations cannot be controlled only by the dispersion of the acoustic waves, as pointed out in \cite{DM12}. In the case of fluids of Korteweg type  an additional difficulty is represented by the loss of information on the gradient of the velocity when vacuum appears and the presence of these phenomena causes the lost of compactness for the momentum term.
So it is particularly important to understand the different behaviors of the various vector fields acting in our system,  what and which are the relationship between high frequency interacting waves, dispersive behavior and the different roles of time and space oscillations. There are distinct dispersive behavior acting on distinct scales and one has to analyze in detail their behavior.

The classical acoustic wave analysis  is able to control  how the velocity field  disperses and oscillates and in detail it follows by analyzing the dispersion of the acoustic equation related to the plasma fluctuation. We get that  the dispersive behavior dominates on the high frequency time oscillations and  usual estimates of Strichartz  type are sufficient to pass into the limit of the convective term, but it is not able to control the electric fields time high frequency oscillations. 
The quadratic terms due to the electric force field  may not be analyzed in the same way since the dispersion may no longer dominates the time high frequency wave packets and we have to take care of the self-interacting waves.  The capillarity term induces  additional dispersive effects on a different scale than the usual acoustic waves and by using non standard Strichartz  estimates for the beam equations we can control the electric field. Intuitively while the acoustic waves scale like the standard D'Alembert equation, the capillarity tensor induces linear dispersive waves which scale like the Klein-Gordon or the Schr\"odinger equation.  The limit behavior of the quadratic nonlinearity of the electric force field is then deduced by a Compensated Compactness argument.
 The structure of this paper, as well as the main ingredients of our approach to this limiting process, can be summarized as follows. In Section 2 we set up our problem and state the main result. In Section 3 we collect the main mathematical tools needed in the paper, including notations and dispersive estimates.
 The Section 4 is devoted to obtain a priori estimates independent of $\la$, namely standard energy bounds. Section 5 concerns the convergence of the density. Section 6 and 7 are devoted to the convergence of the momentum and the electric field respectively. In that sections a careful analysis of the dispersion of the acoustic waves related to the plasma fluctuation  will be performed. Finally, in Section 8  we conclude with the proof of our Main Theorem \ref{tm1}.
 
\section{Statement of the problem and Main Result}

Before performing our limiting analysis, we recast our problem in a more precise way and we recall some  results concerning the existence of weak solutions for the Navier Stokes Poisson Korteweg  system. The system under consideration in this paper is given by the following equations, 
%\begin{equation}
\begin{align}
\partial_{t}{\rho^{\lambda}}+\dive(\rho^{\lambda} u^{\lambda})&=0\notag\\
\partial_{t}(\rho^{\lambda} u^{\lambda})+\dive(\rho^{\lambda} u^{\lambda}\otimes u^{\lambda})+\nabla (\rho^{\lambda})^{\gamma}&=\dive(\mu\rho^{\lambda}D(u^{\lambda}))+\rho^{\lambda}\nabla \Phi^{\lambda}+\kappa\rho^{\lambda}\nabla\Delta\rho^{\lambda}\notag\\
\lambda^{2}\Delta \Phi^{\lambda}&=\rho^{\lambda}-1.
\label{3.1}
\end{align}
%\end{equation}
From now on we set $\mu=\kappa=$1 and we denote by $\pi^{\la}$ the renormalized pressure,
$$\pi^{\lambda}=\frac{(\rl)^{\gamma}-1-\gamma(\rl-1)}{(\gamma-1)} .$$
Moreover we assume the initial data satisfy,
\begin{equation}
\tag {\bf{ID}}
\begin{split}
&\rho^{\lambda}_{t=0}=\rho^{\lambda}_{0}\geq 0, \  \vl|_{t=0}=\Phi_{0}^{\lambda}\notag,\\
&\rl\ul |_{t=0}=m^{\lambda}_{0}, \quad m^{\lambda}_{0}=0 \ \text{on}\ \{x\in \R^{3}\mid \rl_{0}(x)=0\},\notag\\
&\mathcal{E}_{0}=\int_{\R^{3}}\left(\pi^{\la}|_{t=0}+\frac{|\nabla\rho^{\lambda}_{0}|^{2}}{2}+\frac{|m^{\la}_{0}|^{2}}{2\rl_{0}}+\la^{2}|\nabla\Phi^{\la}_{0}|^{2}\right)dx< +\infty.\\
&\int_{\R^{3}}|\nabla\sqrt\rl_{0}|^{2}dx<+\infty.
\end{split}
\label{initial}
\end{equation}

\begin{remark}
\label{r2}
By  \eqref{initial} we get $m^{\la}_{0}$ is bounded in  $H^{-1}(\R^{3})$. In fact  we can rewrite $m^{\la}_{0}$ in the following way 
$$m^{\la}_{0}=\frac{m^{\la}_{0}}{\sqrt{\rl_{0}}}\sqrt{\rl_{0}}\chi_{|\rl_{0}-1|\leq 1/2}+\frac{m^{\la}_{0}}{\sqrt{\rl_{0}}}\frac{\sqrt{\rl_{0}}}{\sqrt{|\rl_{0}-1|}}\sqrt{|\rl_{0}-1|}\chi_{|\rl_{0}-1|> 1/2}$$
then, $m^{\la}_{0}$ is bounded in $L^{2}(\R^{3})+L^{2k/(k+1)}(\R^{3})$, where $k=\min \{2,\gamma\}$ and hence in $H^{-1}(\R^{3})$. 
\end{remark}

The existence of global weak solutions ``\`a la Leray'' for fixed $\la>0$ for the system \eqref{3.1} deserves some comments. One of the main difficulty in the proof of existence of weak solutions for the Navier Stokes equations is the strong compactness of the density in some $L^{p}$ space in order to pass to the limit in the pressure term. In the case of a Navier Stokes Korteweg fluid for the density are available bounds in $L^{\infty}((0,T);\dot H^{1}(\R^{3}))$, hence one can handle easily the convergence of the pressure term. However one is unable to pass to the limit in the quadratic terms of the type $\nabla\rl\otimes\nabla\rl$ appearing in the capillarity tensor and there is also a loss of information for the gradient of the velocity near the vacuum.  The existence of strong solutions for the Navier Stokes Korteweg equations has been obtained by Hattori and Li in \cite{HL96a},  \cite{HL96b}, while the existence of weak solutions has been obtained by Bresch, Desjardins, Lin in \cite{BDL03} where they use some special test functions depending on $\rho$  in order to deal with the vacuum problem. Their result can be easily adapted in order to prove the existence of weak solutions for the system \eqref{3.1}. We summarize this existence result for the system \eqref{3.1} in the following theorem, see \cite{BDD05}.
\begin{theorem}
\label{t1}
Assume \eqref{initial},  and let $\gamma>3/2$, then there exists a global weak solution $(\rl, \ul, \vl)$ to \eqref{3.1} such that $\rl-1\in L^{\infty}((0,T);L^{\gamma}_{2}(\R^{3}))$, $\rl\in L^{2}((0,T);\dot{H}^{2}(\R^{3}))$, $\nabla\rl$ and $\nabla\sqrt{\rl} \in  L^{\infty}((0,T);L^{2}(\R^{3}))$, $\sqrt{\rl}\ul\in L^{2}((0,T);L^{2}(\R^{3}))$, $\sqrt{\rl}D(\ul)\in L^{2}((0,T);L^{2}(\R^{3}))$. 
Furthermore
\begin{itemize}
\item The following energy inequality holds for almost every $t\geq0$,
\begin{equation}
\label{2.5}
\mathcal{E}(t)+\int_{0}^{t}\int_{\R^{3}}\left(\mu|\sqrt{\rl}D\ul|^{2}\right)dxds\leq \mathcal{E}_{0}.
\end{equation}
where
$$\mathcal{E}(t)=\int_{\R^{3}}\left(\rl\frac{|\ul|^{2}}{2}+\pi^{\la}+\frac{|\nabla\rl|^{2}}{2}+\la^{2}|\nabla \vl|^{2}\right)dx.$$
\item The continuity equation $\eqref{3.1}_{1}$ is satisfied in the sense of distribution.
\item For all $\varphi\in\mathcal{D}((0,T)\times\R^{3})$, one has
\begin{align*}
\int_{\R^{3}}&m_{0}\rl_{0}\varphi\!+\!\int_{0}^{T}\!\!\!\!\int_{\R^{3}}\Big((\rl)^{2}\ul\!\cdot\!\partial_{t}\varphi +\rl\ul\otimes\ul\!: \!D(\varphi)-(\rl)^{2}\ul\cdot\varphi\dive \ul\\
&+(\rl)^{\gamma}\dive\varphi-(\rl)^{2}\nabla\vl\cdot\varphi-2\rl D(\ul):\rl D(\ul)\\
&-\rl D(\ul):\varphi\otimes \nabla\rl-(\rl)^{2}\Delta\rl\dive\varphi-2\rl(\varphi\cdot\nabla\rho)\Delta\rl\Big)dxdt=0,
\end{align*}
where $``:''$ denotes the product between matrices.
\end{itemize}
 \end{theorem}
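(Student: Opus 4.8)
The plan is to prove Theorem~\ref{t1} by the now classical strategy for Navier--Stokes systems with density-dependent (degenerate) viscosity: construct a family of regularized problems admitting smooth solutions, derive (for fixed $\lambda$) a priori bounds that are stable under the approximation, and pass to the limit by compactness, following the scheme of Bresch--Desjardins--Lin~\cite{BDL03} and its adaptation in~\cite{BDD05}. Concretely, I would first smooth the system by mollifying the convective term and the initial data, by adding a small artificial viscosity $\e\Delta\ul$, a cold-pressure term, and a density-regularizing term so that the approximate density stays bounded away from zero; for such regularized problems a fixed-point or Faedo--Galerkin argument provides global smooth solutions. The data are approximated so that $\mathcal{E}^{\e}_{0}\to\mathcal{E}_{0}$ and the extra Bresch--Desjardins-type initial quantity involving $\nabla\sqrt{\rl_{0}}$ stays bounded, which is exactly guaranteed by the last two lines of \eqref{initial}.

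The heart of the matter is the two a priori estimates. The first is the classical energy identity: multiplying the momentum equation by $\ul$, using the continuity equation, exploiting that $\int \rl\ul\cdot\nabla\vl = \int \partial_{t}\rl\,\vl = \lambda^{2}\int \partial_{t}\Delta\vl\,\vl = -\frac{\lambda^{2}}{2}\frac{d}{dt}\int|\nabla\vl|^{2}$, and using that the Korteweg term produces the exact differential $\frac{1}{2}\frac{d}{dt}\int|\nabla\rl|^{2}$ (after an integration by parts and the identity $\dive K(\rl)=\rl\nabla\Delta\rl$), one obtains \eqref{2.5}. This delivers $\sqrt{\rl}\ul\in L^{\infty}_{t}L^{2}_{x}$, $\pi^{\la}\in L^{\infty}_{t}L^{1}_{x}$ (hence $\rl-1$ bounded in $L^{\gamma}_{2}$), $\nabla\rl\in L^{\infty}_{t}L^{2}_{x}$, $\la\nabla\vl\in L^{\infty}_{t}L^{2}_{x}$, and $\sqrt{\rl}D(\ul)\in L^{2}_{t,x}$. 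The second is the Bresch--Desjardins entropy estimate: one introduces the effective velocity $\wl=\ul+\nabla\log\rl$ (recall $\mu=1$) and tests the momentum equation accordingly; the viscous and pressure contributions reorganize so as to produce a dissipation controlling $\nabla\sqrt{\rl}$ and, when combined with the capillarity term, the bound $\rl\in L^{2}_{t}\dot H^{2}_{x}$ together with $\nabla\rl^{\gamma/2}\in L^{2}_{t,x}$. The only term not already present in~\cite{BDL03} is $\int\nabla\log\rl\cdot\rl\nabla\vl=\int\nabla\rl\cdot\nabla\vl=-\lambda^{-2}\int(\rl-1)^{2}\le 0$, which carries the favorable sign, so the estimate closes uniformly in the approximation (indeed uniformly in $\lambda$, which is what Section~4 will exploit).

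With these bounds, compactness of the density is essentially automatic: $\rl$ is bounded in $L^{\infty}_{t}H^{1}_{x}\cap L^{2}_{t}H^{2}_{x}$ while $\partial_{t}\rl=-\dive(\rl\ul)$ is bounded in a negative-order space, so Aubin--Lions yields strong convergence of $\rl$, $\sqrt{\rl}$ and $\nabla\rl$ in $L^{2}_{loc}$ and a.e.; in particular the pressure $(\rl)^{\gamma}$ and the otherwise problematic product $\nabla\rl\otimes\nabla\rl$ pass to the limit. For fixed $\lambda>0$ the Poisson coupling is harmless, since elliptic regularity upgrades $\vl$ enough that $\rl\nabla\vl$ (in its density-weighted form) converges. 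The genuine obstacle, and the reason the weak formulation in Theorem~\ref{t1} is written with the $\rl$-weighted test functions $\rl\varphi$ producing the terms $(\rl)^{2}\ul$, $\rl\ul\otimes\ul$, $\rl D(\ul):\rl D(\ul)$, $(\rl)^{2}\Delta\rl\,\dive\varphi$, and so on, is the degeneracy of the viscosity and the loss of control on $\nabla\ul$ near vacuum, which forbids passing to the limit directly in $\rl\ul\otimes\ul$ and in the viscous term.

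Following~\cite{BDL03,BDD05} I would therefore work throughout with the density-weighted formulation, in which only the controlled quantities $\sqrt{\rl}\ul$ and $\sqrt{\rl}D(\ul)$ appear; establishing strong convergence of $\sqrt{\rl}\ul$ in $L^{2}_{loc}$ (by a Mellet--Vasseur-type argument, using the $L^{2}_{t}\dot H^{2}_{x}$ bound on $\rl$ to control the momentum flux near vacuum) then allows one to identify all the quadratic limits, while the energy inequality \eqref{2.5} for the limit solution follows from weak lower semicontinuity. This vacuum-degeneracy compactness step is where essentially all the difficulty lies; the remaining passages to the limit are routine, which is why the statement only records that the argument of~\cite{BDL03} can be adapted.
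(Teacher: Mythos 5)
Your proposal follows essentially the same route as the paper, which does not prove Theorem~\ref{t1} itself but invokes the adaptation of the Bresch--Desjardins--Lin construction given in \cite{BDL03} and \cite{BDD05}: energy estimate plus BD entropy estimate (with the electric term $\int\nabla\rl\cdot\nabla\vl=-\lambda^{-2}\int(\rl-1)^{2}$ contributing with the favorable sign, exactly as in the paper's Section~4), Aubin--Lions compactness for $\rl$, and the $\rl$-weighted test functions to handle the vacuum degeneracy, which is precisely the origin of the weak formulation displayed in the theorem. The sketch is consistent with the cited argument, so no gap to report.
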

Having collected all the preliminary material we are now ready to state our main result.
\begin{Main}
Let $(\rl, \ul, \vl)$ be a sequence of weak solutions in $\R^{3}$ of the system (\ref{3.1}), assume that the initial data satisfy \eqref{initial}. Then
\begin{itemize}
\item [\bf{(i)}] $\rl\longrightarrow 1$ \quad weakly in $L^{\infty}([0,T];L^{k}_{2}(\R^{3}))$ and strongly in\\ $L^{2/s}((0,T);H^{1+s}_{loc}(\R^{3}))\cap C((0,T);H^{s}_{loc}(\R^{3}))$, $0<s<1$.
  \item [\bf{(ii)}] The gradient component ${\bf H}^{\bot}(\rl\ul)$ of the momentum satisfies
  \begin{equation*}
{\bf H}^{\bot}(\rl\ul)\longrightarrow 0\quad \text{strongly in $L^{q}([0,T];L^{p}(\R^{3}))$}, 
\end{equation*}
where $\displaystyle{q=\frac{4(s_{0}+3)}{2s_{0}+5}}$, $\displaystyle{p=\frac{12(s_{0}+3)}{8s_{0}+19}}$ for any $s_{0}\geq 3/2$.
 \item [\bf{(iii)}] The divergence free component ${\bf H}(\rl\ul)$ of the momentum satisfies
   \begin{equation*}
{\bf H}(\rl\ul)\longrightarrow {\bf H}u=u\quad \text{strongly  in $L^{2}([0,T];L^{p}_{loc}(\R^{3}))$, $1\leq p\leq 3/2$}.
 \end{equation*}
 \item [\bf{(iv)}] $\rl\ul \longrightarrow u$  a.e.
 \item [\bf{(v)}] $u={\bf H}u$ satisfies the following equation
\begin{align}
{\bf H}\Big(\partial_{t} u&-\Delta u+(u\cdot\nabla)u\Big)=0,
\label{nsd}
\end{align}
in $\mathcal{D}'([0,T]\times \R^{3})$.
\end{itemize}
\label{tm1}
\end{Main}
The remaining part of this paper is devoted to the proof of the Main Theorem \ref{tm1}.

\section{Notations and Mathematical tools}
For convenience of the reader we establish some notations and recall some basic facts that will be used in the sequel.
\subsection{Notations}
Given real valued functions   $F,G$,  we write $F\lesssim G$ if  there exists $c\in \R$ such that $F\leq c\ G$. 

We denote by  
\begin{itemize}
\item[a)] $\mathcal{D}(\R ^d \times \R_+)$ the space of test function $C^{\infty}_{0}(\R^d \times \R_+)$, by $\mathcal{D}'(\R^d \times\R_+)$ the space of Schwartz distributions and $\langle \cdot, \cdot \rangle$ the duality bracket between $\mathcal{D}'$ and $\mathcal{D}$.
\item[b)] $W^{k,p}(\R^{d})=(I-\Delta)^{-\frac{k}{2}}L^{p}(\R^{d})$ and $H^{k}(\R^{d})=W^{k,2}(\R^{d})$ the nonhomogeneous Sobolev spaces, for any $1\leq p\leq \infty$ and $k\in \R$. $\dot W^{k,p}(\R^{d})=(-\Delta)^{-\frac{k}{2}}L^{p}(\R^{d})$ and $\dot H^{k}(\R^{d})=\dot W^{k,2}(\R^{d})$  denote the homogeneous Sobolev spaces. The notations $L^{p}_{t}L^{q}_{x}$, $L^{p}_{t}W^{k,q}_{x}$, $C_{t}W^{k,q}_{x}$ will abbreviate respectively  the spaces $L^{p}([0,T];L^{q}(\R^{d}))$, $L^{p}([0,T];W^{k,q}(\R^{d}))$ and  $C([0,T];W^{k,q}(\R^{d}))$.
\item[c)] $L^{p}_{2}(\R^{d})$  the Orlicz space defined as follows
\begin{equation}
\label{2.1}
\hspace{-0,2cm}L^{p}_{2}(\R^{d})\!=\!\{f\!\in\! L^{1}_{loc}(\R^{d})\!\mid \!|f|\chi_{|f|\leq \frac{1}{2}}\in L^{2}(\R^{d}),\ |f|\chi_{|f|> \frac{1}{2}}\in L^{p}(\R^{d})\!\},
\end{equation}
see \cite{Ada75} for more details.
\item[d)] $\bf H$ and $\bf{H}^{\bot}$  the Helmotz Leray projectors, $\bf{H}^{\bot}$ on the space of gradients vector fields and $\bf H$ on the space of divergence - free vector fields. Namely
\begin{equation}
{\bf{H}}^{\bot}=\nabla \Delta^{-1}\dive, \qquad \bf H=I-\bf{H}^{\bot}.
\label{pr}
\end{equation} 
It is well known that   $\bf{H}^{\bot}$ and $\bf H$  can be expressed in terms of Riesz multipliers, therefore they are  bounded linear operators on every $W^{k,p}_{x}$ $(1< p<\infty)$ space (see \cite{Ste93}).   \\ \\
\end{itemize}

\subsection{Mathematical tools}
\subsubsection{Compactness theorems}

In the paper we use also the following  compactness lemmas. The first one is the so called Lions-Aubin Lemma,  (see \cite{A63},  \cite{Si}).

\begin{theorem}
Let be $X, B, Y$ Banach spaces such that $X$ is included in $B$ with compact imbedding and $B\subset Y$ and let be $u_{n}$ a bounded sequence in $L^{p}([0,T]; X)$,  such that $\partial u_{n}/\partial t$ are bounded in   $L^{p}([0,T];Y)$ for $1\leq p<\infty$. Then, $u_{n}$ is relatively compact in  $L^{p}([0,T];B)$.
\label{LA}
\end{theorem}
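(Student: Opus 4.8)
The statement is the classical Aubin--Lions--Simon compactness lemma, and I would prove it by combining an interpolation inequality that converts the compact embedding $X\hookrightarrow B$ into a quantitative gain with a uniform time-regularity estimate extracted from the bound on $\partial u_{n}/\partial t$. The plan rests on three ingredients: (a) \emph{Ehrling's inequality}, i.e. for every $\eta>0$ there is $C_{\eta}>0$ with
\begin{equation*}
\|v\|_{B}\le \eta\,\|v\|_{X}+C_{\eta}\,\|v\|_{Y}\qquad\text{for all }v\in X;
\end{equation*}
(b) a uniform time-translation (equicontinuity) estimate in $Y$; and (c) a compactness argument of Arzel\`a--Ascoli / Riesz--Fr\'echet--Kolmogorov type, applied to a time-regularized sequence, to obtain relative compactness in $L^{p}([0,T];Y)$.

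First I would prove (a) by contradiction. If it failed for some $\eta>0$, there would be $v_{k}\in X$ with $\|v_{k}\|_{B}=1$ and $\|v_{k}\|_{B}>\eta\|v_{k}\|_{X}+k\|v_{k}\|_{Y}$. Then $\|v_{k}\|_{X}\le 1/\eta$ stays bounded, so by the compact embedding of $X$ into $B$ a subsequence converges in $B$ to some $v$ with $\|v\|_{B}=1$, while $\|v_{k}\|_{Y}<1/k\to0$. Since $B\subset Y$ continuously, the $B$-limit $v$ coincides with the $Y$-limit $0$, forcing $v=0$ and contradicting $\|v\|_{B}=1$.

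Next, since $\partial u_{n}/\partial t$ is bounded in $L^{p}([0,T];Y)$ uniformly in $n$, the absolutely continuous representative satisfies $u_{n}(t+h)-u_{n}(t)=\int_{t}^{t+h}\partial_{s}u_{n}(s)\,ds$, and H\"older in $s$ followed by integration in $t$ yields
\begin{equation*}
\|u_{n}(\cdot+h)-u_{n}(\cdot)\|_{L^{p}([0,T-h];Y)}\le h\,\|\partial_{t}u_{n}\|_{L^{p}([0,T];Y)}\lesssim h,
\end{equation*}
which is uniform equicontinuity in time in the $Y$-norm. To upgrade this to compactness in $L^{p}([0,T];Y)$ I would introduce the Steklov averages $u_{n}^{h}(t)=\tfrac1h\int_{t}^{t+h}u_{n}(s)\,ds$ on $[0,T-h]$. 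The translation estimate gives $\|u_{n}-u_{n}^{h}\|_{L^{p}([0,T-h];Y)}\lesssim h$ uniformly in $n$, while for fixed $h$ the bound $\|u_{n}^{h}(t)\|_{X}\le h^{-1/p}\|u_{n}\|_{L^{p}([0,T];X)}$ shows that $\{u_{n}^{h}(t)\}_{n}$ is bounded in $X$, hence relatively compact in $Y$ by the compact embedding $X\hookrightarrow B\subset Y$; together with the time-equicontinuity of $u_{n}^{h}$ (for $p>1$, $u_{n}^{h}\in W^{1,p}([0,T-h];Y)$ embeds into a H\"older space), an Arzel\`a--Ascoli argument makes $\{u_{n}^{h}\}_{n}$ relatively compact in $L^{p}([0,T-h];Y)$ for each fixed $h$. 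Letting $h\to0$ with a diagonal extraction then produces a subsequence that is Cauchy in $L^{p}([0,T];Y)$.

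Finally I would combine the pieces: applying Ehrling's inequality pointwise in $t$ and integrating gives
\begin{equation*}
\|u_{n}-u_{m}\|_{L^{p}([0,T];B)}\le \eta\,\|u_{n}-u_{m}\|_{L^{p}([0,T];X)}+C_{\eta}\,\|u_{n}-u_{m}\|_{L^{p}([0,T];Y)}.
\end{equation*}
The first term is at most $2\eta M$ with $M=\sup_{n}\|u_{n}\|_{L^{p}([0,T];X)}$, and the second tends to $0$ along the subsequence just constructed; sending $n,m\to\infty$ and then $\eta\to0$ shows that subsequence is Cauchy in $L^{p}([0,T];B)$, which is the asserted relative compactness. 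The main obstacle, and the step deserving the most care, is exactly the transfer of the compact embedding to each time slice: one has only an $L^{p}$-in-time bound in $X$, not a pointwise one, so the Steklov averaging (or a time mollification) is indispensable, and one must verify that its error is uniformly small using \emph{only} the derivative bound; the borderline case $p=1$, where the equicontinuity-to-compactness passage is most delicate, requires separate attention.
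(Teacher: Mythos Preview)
The paper does not actually prove this statement: it is quoted as a standard tool (the Lions--Aubin lemma) with references to Aubin and Simon, and no argument is supplied. So there is nothing in the paper to compare your approach against.

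That said, your sketch is the standard route one finds in Simon's treatment, and it is essentially correct. The three ingredients you isolate---Ehrling's inequality, the uniform time-translation estimate coming from the bound on $\partial_{t}u_{n}$ in $L^{p}([0,T];Y)$, and the Steklov-average/mollification step to manufacture pointwise-in-time $X$-bounds---are exactly what is needed, and your final Ehrling argument to pass from Cauchy in $L^{p}([0,T];Y)$ to Cauchy in $L^{p}([0,T];B)$ is the right closing move. Two small remarks: your translation bound $\|u_{n}(\cdot+h)-u_{n}\|_{L^{p}_{t}Y}\lesssim h$ is most cleanly obtained via Minkowski's integral inequality applied to $u_{n}(t+h)-u_{n}(t)=\int_{0}^{h}\partial_{t}u_{n}(t+s)\,ds$, rather than ``H\"older in $s$'' (which would only give $h^{1-1/p}$ pointwise); and for the Arzel\`a--Ascoli step on $\{u_{n}^{h}\}$ you implicitly use that $B\hookrightarrow Y$ is continuous, which is indeed part of the hypothesis ``$B\subset Y$'' in the statement. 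Your caveat about $p=1$ is appropriate: there the passage from equicontinuity to compactness in $L^{1}$ requires the Riesz--Fr\'echet--Kolmogorov criterion rather than a H\"older embedding, but the Steklov-average argument still goes through.
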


We will also use the  following generalization of the  div-curl lemma (see Lemma 1.1. in \cite{FLT00})

\begin{lemma}
Assume that $\{u_{n}(\cdot, t)\}$ and $\{v_{n}(\cdot, t)\}$ are vector fields in $\R^{d}$ for $0\leq t\leq T$ such that
\begin{itemize}
\item[L1)] $u_{n}\longrightarrow u$ and $v_{n}\longrightarrow u$ weak-$\ast$ in $L^{\infty}([0,T];L^{2}_{loc}(\R^{d}))$ and strongly in $C([0,T];H^{-1}_{loc}(\R^{d}))$;
\item[L2)] $\{\dive u_{n}\}$ is precompact in $C([0,T];H^{-1}_{loc}(\R^{d}))$;
\item[L3)] $\{\mathrm{curl}\ v_{n}\}$ is precompact in $C([0,T];H^{-1}_{loc}(\R^{d}))$.
\end{itemize}
Then,
$$u_{n}\cdot v_{n}\longrightarrow u\cdot v\qquad \text{in  $\mathcal{D}'([0,T]\times \R^{d})$}.$$
\label{divcurl}
\end{lemma}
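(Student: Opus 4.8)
The plan is to prove this time-dependent div--curl lemma by a Helmholtz (Hodge) decomposition combined with the compensated-compactness mechanism: the only dangerous contribution to the product $u_{n}\cdot v_{n}$ is the pairing of the divergence-free part of $u_{n}$ with the gradient part of $v_{n}$, and this resonant term is neutralized by orthogonality (an integration by parts) rather than by any strong convergence. Throughout, the projectors $\mathbf{H},\mathbf{H}^{\bot}$ of \eqref{pr} act only in $x$, hence commute with the time integration, and since they are Riesz multipliers they are bounded on the relevant spaces. (I read L1 as $v_{n}\to v$, the limit appearing in the conclusion.) First I would localize: fix $\psi\in\mathcal{D}([0,T]\times\R^{d})$ and a cutoff $\chi\in C_{0}^{\infty}(\R^{d})$ equal to one near the $x$-support of $\psi$, and replace $u_{n},v_{n}$ by $\chi u_{n},\chi v_{n}$. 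This leaves $u_{n}\cdot v_{n}$ unchanged on $\mathrm{supp}\,\psi$, makes the fields globally $L^{2}$ with fixed compact support, and preserves L1--L3; for instance $\dive(\chi u_{n})=\chi\dive u_{n}+\nabla\chi\cdot u_{n}$, where the first term is precompact in $C([0,T];H^{-1}_{loc})$ by L2 and the second is bounded in $L^{2}$ with fixed support, hence precompact in $H^{-1}$ by Rellich. So I may treat $\mathbf{H},\mathbf{H}^{\bot},\Delta^{-1}$ as honest Fourier multipliers on $\R^{d}$.

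The next step isolates the components that converge strongly. Writing $\mathbf{H}^{\bot}u_{n}=\nabla\Delta^{-1}\dive u_{n}$ and noting that $\nabla\Delta^{-1}$ gains one derivative (its symbol is $O(|\xi|^{-1})$), hypothesis L2 upgrades the precompactness of $\dive u_{n}$ to $\mathbf{H}^{\bot}u_{n}\to\mathbf{H}^{\bot}u$ strongly in $C([0,T];L^{2}_{loc})$. Symmetrically, the divergence-free part $\mathbf{H}v_{n}$ is recovered from $\mathrm{curl}\,v_{n}$ by an operator of order $-1$ (one may write $\mathbf{H}v_{n}=v_{n}-\nabla\Delta^{-1}\dive v_{n}$ and verify $\|\mathbf{H}v_{n}\|_{L^{2}}\lesssim\|\mathrm{curl}\,v_{n}\|_{\dot H^{-1}}$ on the Fourier side), so L3 gives $\mathbf{H}v_{n}\to\mathbf{H}v$ strongly in $C([0,T];L^{2}_{loc})$. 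By contrast the complementary modes $\mathbf{H}u_{n}$ and $\mathbf{H}^{\bot}v_{n}$, which encode the \emph{uncontrolled} curl of $u_{n}$ and divergence of $v_{n}$, are known only to converge weakly-$\ast$ in $L^{\infty}([0,T];L^{2}_{loc})$.

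Then I would expand $u_{n}\cdot v_{n}=\mathbf{H}^{\bot}u_{n}\cdot\mathbf{H}^{\bot}v_{n}+\mathbf{H}^{\bot}u_{n}\cdot\mathbf{H}v_{n}+\mathbf{H}u_{n}\cdot\mathbf{H}v_{n}+\mathbf{H}u_{n}\cdot\mathbf{H}^{\bot}v_{n}$. In the first three terms at least one factor converges strongly in $L^{2}_{loc}([0,T]\times\R^{d})$ and the other is bounded there, so each passes to the limit by the elementary weak-times-strong argument. The delicate term is the resonant one $\mathbf{H}u_{n}\cdot\mathbf{H}^{\bot}v_{n}$ = (divergence free) $\cdot$ (gradient), in which neither factor is strongly compact. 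I would write $\mathbf{H}^{\bot}v_{n}=\nabla p_{n}$ with $p_{n}=\Delta^{-1}\dive v_{n}$ and integrate by parts against $\psi$, using $\dive\mathbf{H}u_{n}=0$, to obtain $\int\mathbf{H}u_{n}\cdot\nabla p_{n}\,\psi=-\int p_{n}\,\nabla\psi\cdot\mathbf{H}u_{n}$. Although $\dive v_{n}$ itself is uncontrolled, the potential $p_{n}=\Delta^{-1}\dive v_{n}$ is one derivative smoother than $v_{n}$ (the symbol of $\Delta^{-1}\dive$ is again $O(|\xi|^{-1})$), so the strong convergence $v_{n}\to v$ in $C([0,T];H^{-1}_{loc})$ from L1 upgrades to $p_{n}\to p$ strongly in $C([0,T];L^{2}_{loc})$. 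Hence $p_{n}\nabla\psi\to p\nabla\psi$ strongly in $L^{2}_{t,x}$ while $\mathbf{H}u_{n}\rightharpoonup\mathbf{H}u$ weakly, giving the limit $-\int p\,\nabla\psi\cdot\mathbf{H}u=\int\mathbf{H}u\cdot\mathbf{H}^{\bot}v\,\psi$. Summing the four limits and recombining $u=\mathbf{H}u+\mathbf{H}^{\bot}u$, $v=\mathbf{H}v+\mathbf{H}^{\bot}v$ produces exactly $\int u\cdot v\,\psi$.

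The crux, and the only place where the strong $C([0,T];H^{-1}_{loc})$ hypothesis is indispensable, is this resonant term: one must exploit that the scalar product annihilates the product of the two uncontrolled modes through orthogonality, and that passing to the potential $\Delta^{-1}\dive v_{n}$ trades the missing derivative for the available strong $H^{-1}$ convergence. The remaining difficulties are routine bookkeeping, chiefly the cutoff argument needed to justify treating the nonlocal Helmholtz projectors and $\Delta^{-1}$ as genuine multipliers, and the observation that convergence in $C([0,T];L^{2}_{loc})$ or weak-$\ast$ in $L^{\infty}([0,T];L^{2}_{loc})$ automatically yields the corresponding space-time $L^{2}_{loc}$ convergences used above.
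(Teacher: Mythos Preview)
The paper does not prove this lemma at all: it is stated as a known tool and attributed to Lemma~1.1 of \cite{FLT00} (Lopes Filho, Nussenzveig Lopes, Tadmor), with no argument given. There is therefore no ``paper's own proof'' against which to compare your attempt.

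That said, your proposal is a correct and essentially standard route to such time-dependent div--curl lemmas. The Helmholtz splitting isolates the two modes that L2 and L3 make strongly compact in $C([0,T];L^{2}_{loc})$, leaving a single resonant pairing $\mathbf{H}u_{n}\cdot\mathbf{H}^{\bot}v_{n}$; you neutralize it by integrating by parts against the test function and using that $\Delta^{-1}\dive$ gains one derivative, so the strong $H^{-1}_{loc}$ convergence in L1 upgrades the potential $p_{n}=\Delta^{-1}\dive v_{n}$ to strong $L^{2}_{loc}$. This is exactly the compensated-compactness mechanism behind the classical Murat--Tartar lemma, adapted to carry the time variable along uniformly. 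The localization step you sketch (cut off, then treat the Riesz projectors and $\Delta^{-1}$ as honest Fourier multipliers on compactly supported data) is the right way to make the nonlocal operators interact cleanly with the ``loc'' hypotheses; the commutator terms $\nabla\chi\cdot u_{n}$ and $\nabla\chi\wedge v_{n}$ are indeed compact in $H^{-1}$ by Rellich. One small point worth tightening: after cutting off, the sequences $\chi u_{n}$, $\chi v_{n}$ lie in $L^{\infty}_{t}L^{2}(\R^{d})$ with fixed support, so the relevant Sobolev spaces are global and the symbol calculus is unambiguous; you say this but it is the step most likely to be queried. Otherwise the argument is sound and in fact more detailed than what the paper provides.
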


\subsubsection{Strichartz estimates for dispersive  equations}
As explained in the Introduction in the sequel we need dispersive estimates for equations describing the acoustic waves. So it is worthwhile to recall the basic facts for this equations.
Following Keel and Tao \cite{KT98}, we start with a very general abstract setting.

Let $(X,dx)$ be measure space and $H$ a Hilbert space and for all $t\in \R$, $U(t):H\rightarrow L^{2}(X)$ ($(U(t))^{\ast}$ it's adjoint) an operator that fulfills the following inequality
\begin{equation}
\label{kt1}
\|U(t)f\|_{L^{2}_{x}}\lesssim \|f\|_{H}
\end{equation}
and for some $\delta>0$ one of the following ``dispersive'' estimate. 

For all $t\neq s$ and all $g\in L^{1}(X)$
\begin{equation}
\label{kt2}
\|U(s)(U(t))^{\ast}g\|_{L^{\infty}}\lesssim |t-s|^{-\delta}\|g\|_{L^{1}}.
\end{equation}
For all $t, s$ and all $g\in L^{1}(X)$
\begin{equation}
\label{kt3}
\|U(s)(U(t))^{\ast}g\|_{L^{\infty}}\lesssim (1+|t-s|)^{-\delta}\|g\|_{L^{1}}.
\end{equation}
\begin{definition}
We say that the exponent pair $(q,r)$ is $\delta$-admissible if $q,r\leq 2$, $(q,r,\delta)\neq (2,\infty,1)$ and
\begin{equation}
\label{kt4}
\frac{1}{q}+\frac{\delta}{r}\leq \frac{\delta}{2}.
\end{equation}
If  equality \eqref{kt4} hold we say that $(q,r)$ is sharp $\delta$- admissible.
\end{definition}
We have then the following Strichartz type estimate (see \cite{KT98}).
\begin{theorem}
\label{KT}
If $U(t)$ obeys \eqref{kt2} and \eqref{kt3}, then the estimates
\begin{equation}
\label{kt5}
\|U(t)f\|_{L^{q}_{x}L^{r}_{t}}\lesssim \|f\|_{H}
\end{equation}
\begin{equation}
\label{kt6}
\|\int (U(s))^{\ast}F(s)ds\|_{H}\lesssim \|F\|_{L^{q'}_{x}L^{r'}_{t}}
\end{equation}
\begin{equation}
\label{kt7}
\|\int U(t)(U(s))^{\ast}F(s)ds\|_{L^{q}_{x}L^{r}_{t}}\lesssim \|F\|_{L^{\tilde{q}'}_{x}L^{\tilde{r}'}_{t}}
\end{equation}
hold for all sharp $\delta$-admissible pairs $(q,r)$, $(\tilde{q},\tilde{r})$.
Furthermore if the decay hypothesis is strengthened to \eqref{kt3}, then \eqref{kt5}, \eqref{kt6}, \eqref{kt7} hold for all $\delta$-admissible pairs $(q,r)$, $(\tilde{q},\tilde{r})$.
\end{theorem}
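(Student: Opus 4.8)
The plan is to follow the abstract $TT^{*}$ method of Keel and Tao. First I would note that the three estimates \eqref{kt5}, \eqref{kt6}, \eqref{kt7} are not independent. Writing $T\colon H\to L^{q}_{t}L^{r}_{x}$, $Tf=U(t)f$, estimate \eqref{kt5} reads $\|Tf\|\lesssim\|f\|_{H}$, its adjoint statement is exactly \eqref{kt6} with $T^{*}F=\int (U(s))^{*}F(s)\,ds$, and since $TT^{*}F=\int U(t)(U(s))^{*}F(s)\,ds$ the operator identity $\|T\|^{2}=\|TT^{*}\|$ shows that \eqref{kt5} and \eqref{kt6} both follow from the diagonal case $\tilde q=q$, $\tilde r=r$ of \eqref{kt7}. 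Hence it suffices to prove the bilinear form estimate dual to \eqref{kt7}, namely
\[
\left|\int\!\!\int \langle U(t)(U(s))^{*}F(s),\,G(t)\rangle\,ds\,dt\right|\lesssim \|F\|_{L^{\tilde q'}_{t}L^{\tilde r'}_{x}}\,\|G\|_{L^{q'}_{t}L^{r'}_{x}}
\]
for admissible pairs $(q,r)$ and $(\tilde q,\tilde r)$.

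Second, I would interpolate the two hypotheses. The energy bound \eqref{kt1} gives $\|U(t)(U(s))^{*}\|_{L^{2}_{x}\to L^{2}_{x}}\lesssim 1$, while \eqref{kt2} gives $\|U(t)(U(s))^{*}\|_{L^{1}_{x}\to L^{\infty}_{x}}\lesssim |t-s|^{-\delta}$, so Riesz--Thorin interpolation yields, for every $2\le r\le\infty$,
\[
\|U(t)(U(s))^{*}g\|_{L^{r}_{x}}\lesssim |t-s|^{-\delta(1-2/r)}\,\|g\|_{L^{r'}_{x}}.
\]
Inserting this into the bilinear form bounds it by $\int\!\!\int |t-s|^{-\beta}\|F(s)\|_{L^{r'}_{x}}\|G(t)\|_{L^{r'}_{x}}\,ds\,dt$ with $\beta=\delta(1-2/r)$. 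For the non-endpoint pairs ($q>2$) the Hardy--Littlewood--Sobolev inequality closes the estimate exactly when $\beta=2/q$, that is when $\tfrac1q+\tfrac\delta r=\tfrac\delta2$, which is the sharp admissibility relation \eqref{kt4}; this settles all sharp admissible pairs off the endpoint. When the decay is improved to \eqref{kt3}, the kernel $(1+|t-s|)^{-\beta}$ is globally integrable to the right power, so Young's inequality replaces the scale-invariant Hardy--Littlewood--Sobolev step and the strict inequality $\tfrac1q+\tfrac\delta r<\tfrac\delta2$ becomes admissible as well, giving the full non-sharp range claimed in the ``furthermore'' part.

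The main obstacle is the endpoint $q=2$, possible only when $\delta>1$, where $\beta=1$ and Hardy--Littlewood--Sobolev fails by a logarithm. Here I would use the Keel--Tao dyadic real-interpolation device: decompose the bilinear form as $\sum_{j}T_{j}(F,G)$ according to $|t-s|\sim 2^{j}$, and prove for each shell an \emph{off-diagonal} bound of the form $|T_{j}(F,G)|\lesssim 2^{-j\beta(a,b)}\|F\|_{L^{2}_{t}L^{a'}_{x}}\|G\|_{L^{2}_{t}L^{b'}_{x}}$ with exponents $(a,b)$ slightly off the diagonal $a=b=r$, obtained from the interpolated dispersive estimate above together with a crude (Schur-type) integration in time over the shell. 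Naive summation over $j$ diverges, so the crux is to interpolate these shell estimates across a small neighbourhood of $(1/r,1/r)$ and sum by the atomic/Lorentz-space argument, which converts the logarithmic loss into a summable geometric gain; this is the delicate step that makes the endpoint work, and it is exactly where the excluded case $(q,r,\delta)=(2,\infty,1)$ must be set aside.

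Finally I would assemble the pieces. The non-endpoint and endpoint bilinear estimates together establish \eqref{kt7} for the relevant admissible pairs---sharp under \eqref{kt2} and the full range under \eqref{kt3}---and the reductions of the first paragraph then deliver \eqref{kt5} and \eqref{kt6}. Since \eqref{kt7} is stated for the full (non-retarded) operator, no Christ--Kiselev truncation is needed; that refinement would only enter if one wanted the retarded Duhamel operator $\int_{s<t}$ instead.
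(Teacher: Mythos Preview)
The paper does not prove this theorem at all: it is quoted as a black box from Keel and Tao \cite{KT98}, with the reference ``(see \cite{KT98})'' standing in for the argument. Your outline is a faithful sketch of the Keel--Tao proof itself --- the $TT^{*}$ reduction, the interpolated dispersive bound, Hardy--Littlewood--Sobolev for the non-endpoint sharp pairs, Young for the non-sharp range under \eqref{kt3}, and the dyadic real-interpolation trick at the endpoint --- so there is no discrepancy to discuss; you have simply filled in what the paper deliberately left to the citation.
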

In the next to section we will apply the Theorem \ref{KT} to the dispersive equations used in the paper.
\subsubsection{Strichartz estimate for Klein Gordon equation}
We apply the previous Theorem \ref{KT} to the following Klein Gordon equation,
\begin{equation*}
\left(-\partial_{tt}+\Delta-m^{2}\right)w(t,x)=F(t,x)\\
\end{equation*}
with Cauchy data 
\begin{equation*}
w(0,\cdot)=f,\quad \partial_{t}w(0,\cdot)=g,
\end{equation*}
where $m>0$ is the mass and $0<T<\infty$.  It turns out that the Klein Gordon operator satisfies the decay estimate \eqref{kt2}-\eqref{kt3} with exponent $\displaystyle{\delta=\frac{d-1}{2}}$, so by applying the Theorem \ref{KT} with $d=3$, $X=\R^{3}$  and $H=L^{2}(\R^{3})$ we get that $w$ satisfies the following Strichartz estimate, (see also  Corollary 2, page 712 in \cite{S77})
\begin{equation*}
\begin{split}
\|w\|_{L^{q}_{t,x}}+\|\partial_{t}w\|_{L^{q}_{t}W^{-1,q}_{x}}+\|w\|_{C_{t}\dot H^{1/2}_{x}}&+\|\partial_{t}w\|_{C_{t}\dot H^{-1/2}_{x}}\\&\lesssim \|f\|_{\dot H^{1/2}_{x}}+\|g\|_{\dot H^{-1/2}_{x}}+\|F\|_{L^{p}_{t,x}},
\end{split}
\label{s2'}
\end{equation*}
where $(q,p)$,  are \emph{admissible pairs}, namely they satisfy 
%\begin{equation*}
%\frac{2(n+1)}{n+3}\leq p\leq\frac{2(n+2)}{n+4} \qquad 
%\frac{2(n+2)}{n}\leq p\leq\frac{2(n+1)}{n-1}.
%\end{equation*}
%In particular in the case of $d=3$, $(q,p)$ are admissible if they satisfy
\begin{equation*}
\frac{4}{3}\leq p \leq \frac{10}{7} \qquad \frac{10}{3}\leq q \leq 4. 
\end{equation*}
By choosing $p=4/3$ and $q=4$ and by a standard application of Duhamel's principle 
%we have the following estimate
%\begin{equation}
%\|w\|_{L^{4}_{t,x}}+\|\partial_{t}w\|_{L^{4}_{t}W^{-1,4}_{x}}\lesssim \|f\|_{\dot H^{1/2}_{x}}+\|g\|_{\dot H^{1/2}_{x}}+\|F\|_{L^{1}_{t}L^{2}_{x}}.
%\label{s1}
%\end{equation}
it is straightforward to observe that for any $s\in\R$  the following Strichartz  estimate holds,
\begin{equation}
\begin{split}
\|w\|_{L^{4}_{t}W^{s,4}_{x}}+\|\partial_{t}w\|_{L^{4}_{t}W^{-1+s,4}_{x}}&+\|w\|_{C_{t}\dot H^{1/2+s}_{x}}+\|w_{t}\|_{C_{t}\dot H^{-1/2+s}_{x}}\\
&\lesssim \|f\|_{ \dot H^{1/2+s}_{x}}+\|g\|_{ \dot H^{-1/2+s}_{x}}+\|F\|_{L^{1}_{t}\dot H^{s}_{x}}.
\end{split}
\label{s2}
\end{equation}
%(it is sufficient to  apply the operator $(I-\Delta)^{-s/2}$ to \eqref{s1}).

\subsubsection{Strichartz estimates for the Beam  equations}
The second dispersive equation we use is the so called  Beam  equation,
\begin{equation*}
\left(-\partial_{tt}+\Delta^{2}-m^{2}\right)w(t,x)=F(t,x)\\
\end{equation*}
with Cauchy data 
\begin{equation*}
w(0,\cdot)=f,\quad \partial_{t}w(0,\cdot)=g,
\end{equation*}
The Beam operator verifies the decay estimates \eqref{kt2}-\eqref{kt3} with and exponent $\displaystyle{\delta=\frac{d}{4}}$, see for example \cite{GPW08}, \cite{L98}. Then by applying the Theorem \ref{KT} with $d=3$, $X=\R^{3}$  and $H=L^{2}(\R^{3})$ we get that $w$ satisfies the following Strichartz estimate (for more details see Theorems 3.1 and 3.2 in \cite{L98})
\begin{equation*}
\|w\|_{L^{q}_{t,x}}+\|w\|_{C_{t}L^{2}_{x}}+\|\partial_{t}w\|_{C_{t}\dot H^{-2}_{x}}\leq \|f\|_{\dot H^{2}}+\|g\|_{L^{2}_{x}}+\|F\|_{L^{q}_{t,x}}.
\end{equation*}
where $q$ is \emph{admissible}, namely it satisfy
\begin{equation*}
q\geq 2+\frac{8}{3}
\end{equation*}
As before, by a standard application of Duhamel's principle it is straightforward to see that for any $s\in \R$ the following estimate is also true
\begin{equation}
\|w\|_{W^{s,q}_{t,x}}+\|w\|_{C_{t}\dot H^{s}_{x}}+\|\partial_{t}w\|_{C_{t}\dot H^{s-2}_{x}}\leq \|w_{0}\|_{\dot H^{2+s}}+\|w_{1}\|_{\dot H^{s}_{x}}+\|F\|_{L^{1}_{t}\dot H^{s}_{x}}.
\label{s3}
\end{equation}

\section{Uniform estimates}
\label{S-estimates}
In this section we wish to establish all the basic a priori estimates, independent on $\la$, for the solutions of the system \eqref{3.1}. We point out that all the estimates of this section can be recovered by using smooth approximating solutions constructed by means of  a regularizing process. We skip all the details since this procedure is standard in the literature.
First of all we remind that from the Theorem \ref{t1} we have that the solutions of \eqref{3.1} satisfy the following uniform energy estimate
\begin{equation}
\label{energy}
\mathcal{E}(t)+\int_{0}^{t}\int_{\R^{3}}\left(\mu|\sqrt{\rl}D\ul|^{2}\right)dxds\leq \mathcal{E}_{0}
\end{equation}
where
$$\mathcal{E}(t)=\int_{\R^{3}}\left(\rl\frac{|\ul|^{2}}{2}+\pi^{\la}+\frac{|\nabla\rl|^{2}}{2}+\la^{2}|\nabla \vl|^{2}\right)dx.$$

Beside the standard estimate \eqref{energy} it is possible to recover some further bounds on the second derivative of $\rl$ and on $\nabla\sqrt{\rl}$. First, we need to prove the following lemma.
\begin{lemma}
Assume that $(\rl, \ul, \vl)$ is a global weak solution of \eqref{3.1} and that \eqref{initial} hold, then the  following identity holds,
\begin{equation}
\begin{split}
\frac{1}{2}\frac{d}{dt}\int_{\R^{3}}\rl |\nabla\log\rl|^{2} dx&+\int_{\R^{3}}\nabla\dive \ul\cdot \nabla\rl dx\\
&+\int_{\R^{3}}\rl D(\ul):\nabla\log\rl\otimes\nabla\log\rl dx=0.
\end{split}
\label{ineq1}
\end{equation}
\end{lemma}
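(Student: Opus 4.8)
The identity \eqref{ineq1} is purely kinematic: it follows from the continuity equation $\eqref{3.1}_1$ alone and does not involve the momentum balance. The plan is to introduce $\phi^\la:=\log\rl$ and to rewrite the continuity equation in the form $\partial_t\phi^\la+\ul\cdot\nabla\phi^\la+\dive\ul=0$, which one obtains by dividing $\eqref{3.1}_1$ by $\rl$ (valid where $\rl>0$). Taking the gradient gives $\partial_t\nabla\phi^\la=-\nabla(\ul\cdot\nabla\phi^\la)-\nabla\dive\ul$, and this is the identity that will be fed into the time derivative of the functional. Recall also that $\rl\nabla\phi^\la=\nabla\rl$ and $\rl|\nabla\phi^\la|^2=|\nabla\rl|^2/\rl=4|\nabla\sqrt{\rl}|^2$.

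Next I would differentiate $t\mapsto\int_{\R^3}\rl|\nabla\phi^\la|^2\,dx$, obtaining
$$\frac{d}{dt}\int_{\R^3}\rl|\nabla\phi^\la|^2\,dx=\int_{\R^3}\partial_t\rl\,|\nabla\phi^\la|^2\,dx+2\int_{\R^3}\rl\,\nabla\phi^\la\cdot\partial_t\nabla\phi^\la\,dx.$$
Into the first term I substitute $\partial_t\rl=-\dive(\rl\ul)$ and integrate by parts to get $\int_{\R^3}\rl\,\ul\cdot\nabla(|\nabla\phi^\la|^2)\,dx$; into the second I insert the expression for $\partial_t\nabla\phi^\la$ above. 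The contribution $-2\int_{\R^3}\rl\,\nabla\phi^\la\cdot\nabla\dive\ul\,dx$ becomes, using $\rl\nabla\phi^\la=\nabla\rl$, exactly $-2\int_{\R^3}\nabla\dive\ul\cdot\nabla\rl\,dx$, which after dividing by $2$ is the second term of \eqref{ineq1}. The remaining terms $\int_{\R^3}\rl\,\ul\cdot\nabla(|\nabla\phi^\la|^2)-2\int_{\R^3}\rl\,\nabla\phi^\la\cdot\nabla(\ul\cdot\nabla\phi^\la)$ are handled by a short computation in coordinates: expanding $\partial_i(\ul_j\partial_j\phi^\la)=\partial_i\ul_j\,\partial_j\phi^\la+\ul_j\,\partial_{ij}\phi^\la$, the second-order contribution $\rl\,\ul_j\,\partial_i\phi^\la\,\partial_{ij}\phi^\la$ cancels the transport term $\tfrac12\int_{\R^3}\rl\,\ul\cdot\nabla(|\nabla\phi^\la|^2)$, and one is left with $-2\int_{\R^3}\rl\,\partial_i\ul_j\,\partial_i\phi^\la\,\partial_j\phi^\la\,dx$; since $\nabla\phi^\la\otimes\nabla\phi^\la$ is symmetric, this equals $-2\int_{\R^3}\rl\,D(\ul):\nabla\phi^\la\otimes\nabla\phi^\la\,dx$. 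Dividing by $2$ and rearranging produces exactly \eqref{ineq1}.

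Finally I would address the rigour. The computation above is formal, since $\log\rl$ is singular on the vacuum set $\{\rl=0\}$ and the solutions of Theorem \ref{t1} have only limited regularity. One makes it rigorous by performing it on the approximating sequence used in the construction of the weak solutions (the regularized Faedo--Galerkin scheme of \cite{BDL03}, adapted in \cite{BDD05}), where $\rl$ is smooth and bounded away from zero, so all the integrations by parts are legitimate, and then passing to the limit; the a priori bounds collected in Theorem \ref{t1} — in particular $\nabla\sqrt{\rl}\in L^\infty((0,T);L^2(\R^3))$ (so that $\nabla\rl/\sqrt{\rl}=2\nabla\sqrt{\rl}$ is controlled), $\sqrt{\rl}\,D(\ul)\in L^2((0,T);L^2(\R^3))$ and $\rl\in L^2((0,T);\dot H^2(\R^3))$ — guarantee that every integral appearing in \eqref{ineq1} is finite and that the passage to the limit is justified. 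I expect this last point to be the only real obstacle: making sense of, and bounding uniformly near the vacuum, the cubic term $\int_{\R^3}\rl\,D(\ul):\nabla\log\rl\otimes\nabla\log\rl\,dx$ together with the mixed term $\int_{\R^3}\nabla\dive\ul\cdot\nabla\rl\,dx$, which is precisely where the Korteweg dissipation and the Bresch--Desjardins structure are needed.
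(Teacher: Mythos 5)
Your proof is correct and follows essentially the same route as the paper, which likewise divides the continuity equation by $\rl$, differentiates in space, multiplies by $\rl\nabla\log\rl$ and integrates by parts; your version simply writes out the cancellations explicitly and adds the reasonable remark that the computation should be performed on the approximating solutions and passed to the limit using the bounds of Theorem \ref{t1}. The only tiny slip is the bookkeeping in the cancellation step: the contribution $-2\int_{\R^{3}}\rl\,\ul_{j}\,\partial_{i}\phi^{\la}\,\partial_{ij}\phi^{\la}\,dx$ cancels the whole transported term $\int_{\R^{3}}\rl\,\ul\cdot\nabla|\nabla\phi^{\la}|^{2}\,dx$, not half of it, and with that your final identity is exactly \eqref{ineq1}.
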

\begin{proof}
By dividing first the continuity equation $\eqref{3.1}_{1}$ by $\rl$, then by differentiating it with respect to space we get
\begin{equation}
\partial_{t}(\partial_{i} \log\rl)+ \partial_{i}  \dive\ul+\partial_{i} (\nabla\log\rl\cdot u)=0.
\label{con1}
\end{equation}
The identity \eqref{ineq1} follows now, easily,   by multiplying  \eqref{con1} by $\rl\partial_{i}\log\rl$ and integrating by parts.
\end{proof}
Using the identity \eqref{ineq1} we are able to prove the following uniform estimate,
\begin{proposition}
Assume that $(\rl, \ul, \vl)$ is a global weak solution of \eqref{3.1} and that \eqref{initial} hold, then the solutions of the system \eqref{3.1} satisfy the following inequality,
\begin{equation}
\begin{split}
\frac{1}{2}\frac{d}{dt}\int_{\R^{3}}&\Big(\rl |\ul+\nabla\log\rl|^{2}+|\nabla\rl|^{2}+\lambda|\nabla\vl|^{2}\Big)dx\\
&+4\int_{\R^{3}}\Big(p'(\rl)|\nabla\sqrt{\rl}|^{2}+|\nabla\nabla\rl |^{2}+\frac{(\rl-1)^{2}}{\lambda^{2}}\Big)dx\leq \mathcal{E}_{0}.
\end{split}
\label{ineq2}
\end{equation}
\end{proposition}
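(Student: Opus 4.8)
The plan is to combine the basic energy inequality \eqref{energy} with the BD-type entropy identity \eqref{ineq1}, taking a suitable linear combination so that the bad cross terms cancel and we are left with the dissipative integrals in \eqref{ineq2}. First I would rewrite the product appearing in \eqref{ineq1}. The term $\int_{\R^3}\nabla\dive\ul\cdot\nabla\rl\,dx$ should be integrated by parts and reorganized using the continuity equation: writing $\dive\ul = -\partial_t\log\rl - \ul\cdot\nabla\log\rl$, one sees that $\int \nabla\dive\ul\cdot\nabla\rl$ produces, after integration by parts, a term $\tfrac{d}{dt}\int|\nabla\rl|^2$-type contribution together with the Laplacian-of-density term. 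More precisely, the identity $\int\rl\nabla\dive\ul\cdot\nabla\log\rl = \int\nabla\dive\ul\cdot\nabla\rl$ and an integration by parts turns this into $-\int\dive\ul\,\Delta\rl$, which via the continuity equation and a further integration by parts yields $\tfrac12\tfrac{d}{dt}\int|\nabla\rl|^2$ plus lower-order transport terms that must be tracked against the viscous term in \eqref{energy}.

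Next I would extract the dissipation. The heuristic identity behind \eqref{ineq2} is the standard one for Korteweg fluids: testing the momentum equation against $\nabla\log\rl$ (the BD test function) produces, besides the kinetic-type quantity $\int\rl|\ul\nabla\log\rl|^2$ wait — more accurately the augmented energy with the drift velocity $\ul+\nabla\log\rl$ — the key dissipative quantities: the capillarity term yields $\int|\nabla^2\rl|^2$ (from $\int\rl\nabla\Delta\rl\cdot\nabla\log\rl = \int\nabla\Delta\rl\cdot\nabla\rl = -\int|\Delta\rl|^2$ combined with $\int|\nabla^2\rl|^2 = \int|\Delta\rl|^2$ on $\R^3$); the pressure term yields $\int p'(\rl)|\nabla\sqrt\rl|^2$ (since $\nabla(\rl)^\gamma\cdot\nabla\log\rl = \gamma(\rl)^{\gamma-2}|\nabla\rl|^2 \sim p'(\rl)|\nabla\sqrt\rl|^2$ up to constants); and the Poisson coupling $\int\rl\nabla\vl\cdot\nabla\log\rl = \int\nabla\vl\cdot\nabla\rl = \int\nabla\vl\cdot\nabla(\rl-1) = -\int(\rl-1)\Delta\vl = -\lambda^{-2}\int(\rl-1)^2$, which after moving to the correct side gives $+\lambda^{-2}\int(\rl-1)^2$ as a dissipative term and accounts for the $\lambda|\nabla\vl|^2$ time-derivative term (using \eqref{3} again). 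Adding this identity to \eqref{energy} and using \eqref{ineq1} to cancel the indefinite-sign terms $\int\nabla\dive\ul\cdot\nabla\rl$ and $\int\rl D(\ul):\nabla\log\rl\otimes\nabla\log\rl$ against the $\sqrt\rl D(\ul)$ viscous contribution (after completing the square in $\sqrt\rl\,\ul$ and $\sqrt\rl\,\nabla\log\rl$) gives the claimed differential inequality, the constant $4$ and the bound $\mathcal{E}_0$ coming from the choice of coefficients in the linear combination and the initial-data hypothesis \eqref{initial}.

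The main obstacle I anticipate is the rigorous justification of all these integrations by parts and the test-function manipulations at the level of weak solutions, since $\nabla\log\rl$ is singular where $\rl$ vanishes and the weak formulation in Theorem \ref{t1} only controls $\sqrt\rl D(\ul)$, $\nabla\rl$, $\nabla\sqrt\rl$ and $\sqrt\rl\,\ul$ in $L^2$. One must either work with the regularized solutions from the existence proof of \cite{BDD05}/\cite{BDL03} and pass to the limit, or use the truncations $\chi_{|\rl-1|\leq 1/2}$ in the spirit of Remark \ref{r2}; the algebra then has to be arranged so that every term that appears is a product of quantities bounded by the energy and the BD entropy. A secondary technical point is controlling the quadratic term $\int\rl D(\ul):\nabla\log\rl\otimes\nabla\log\rl$, which is cubic-looking in the "velocities'': this is handled by Cauchy–Schwarz, bounding it by $\|\sqrt\rl D\ul\|_{L^2}\|\sqrt\rl\nabla\log\rl\|_{L^4}^2$ wait — rather by absorbing part into the viscous dissipation and part into $\int|\nabla^2\rl|^2$ after noting $\sqrt\rl\nabla\log\rl = 2\nabla\sqrt\rl$ so that $\rl|\nabla\log\rl|^2 = 4|\nabla\sqrt\rl|^2$, which is exactly the quantity already controlled by the last line of \eqref{initial}. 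With these identifications the inequality follows by collecting terms.
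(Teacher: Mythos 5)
Your proposal follows essentially the same route as the paper: multiply the momentum equation by $\nabla\rl/\rl=\nabla\log\rl$, convert $\int\nabla\vl\cdot\nabla\rl$ into $-\la^{-2}\int(\rl-1)^{2}$ via the Poisson equation, and combine the resulting identity with the BD-type identity \eqref{ineq1} and the energy inequality \eqref{energy}, with the pressure and capillarity terms correctly producing $4p'(\rl)|\nabla\sqrt{\rl}|^{2}$ and $|\nabla\nabla\rl|^{2}$ (in the paper the term $\rl D(\ul):\nabla\log\rl\otimes\nabla\log\rl$ cancels exactly between \eqref{in1} and \eqref{ineq1}, so no absorption argument is needed). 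The rigor issue you raise about justifying these manipulations for weak solutions is real, but the paper's own proof is carried out at the same formal level, so your outline matches it.
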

\begin{proof}
By multiplying the momentum equation $\eqref{3.1}_{2}$ by $\nabla\rl/\rl$ and by integrating by parts we get
\begin{equation}
\begin{split}
\int_{\R^{3}}4p'(\rl)|\nabla\sqrt{\rl}|^{2}dx&+\int_{\R^{3}}|\nabla\nabla\rl |^{2}dx\\
+\int_{\R^{3}}\partial_{t}\ul\nabla\rl dx+\int_{\R^{3}}\ul&\nabla\ul\nabla\rl dx
+\int_{\R^{3}}\nabla\ul :\nabla\nabla\rl dx\\
-\int_{\R^{3}}\rl D(\ul)&:\nabla\log\rl\otimes\nabla\log\rl dx=\int_{\R^{3}}\nabla\vl\nabla\rl dx.
\end{split}
\label{in1}
\end{equation}
Now by using the Poisson equation $\eqref{3.1}_{3}$we can rewrite the integral in the right hand side of \eqref{in1} as follows,
\begin{equation}
\begin{split}
\int_{\R^{3}}\nabla\vl\nabla\rl dx&=\int_{\R^{3}}\nabla\vl\nabla(\rl-1)dx\\
&=-\int_{\R^{3}}\Delta\vl(\rl-1)dx=-\int_{\R^{3}}\frac{(\rl-1)^{2}}{\lambda^{2}}dx
\end{split}
\label{in2}
\end{equation}
Now by combing together \eqref{in1} with \eqref{in2}  the identity  \eqref{ineq1}  and the energy estimate \eqref{energy} we end up with \eqref{ineq2}.
\end{proof}

\subsection{Consequences of the uniform estimate}
We collect here  all the a priori bounds provided by the energy inequality  \eqref{energy} and from  the uniform estimate \eqref{ineq2}. From \eqref{energy} we get that  there exists $c>0$  depending only from $\mathcal{E}_{0}$, such that
\begin{equation}
\label{c1}
\|\sqrt{\rl}\ul\|_{L^{\infty}_{t}L^{2}_{x}}\leq c,
\end{equation}
\begin{equation}
\label{c2}
\|\sqrt{\rl}D(\ul)\|_{L^{2}_{t}L^{2}_{x}}\leq c,
\end{equation}
\begin{equation}
\label{c3}
\|\nabla\rl\|_{L^{\infty}_{t}L^{2}_{x}}\leq c,
\end{equation}
\begin{equation}
\label{c4}
\|\lambda\nabla\vl\|_{L^{\infty}_{t}L^{2}_{x}}\leq c.
\end{equation}
Since $\pi^{\la}\in L^{\infty}([0,T];L^{1}(\R^{3}))$ it is straightforward to deduce 
\begin{equation}
\label{c5}
\rl-1\quad\text{is bounded in $L^{\infty}([0,T];L^{k}_{2}(\R^{3}))$, where $k=\min(\gamma,2)$},
\end{equation}
\begin{equation}
\label{c5bis}
\rl\quad\text{is bounded in $C([0,T];L^{p}_{loc}(\R^{3}))$, where  $1\leq p< \gamma$},
\end{equation}
Moreover, the additional estimate \eqref{ineq2} provides more regularity on $\rl$, in fact we have that
\begin{equation}
\|\rl\|_{L^{2}_{t}\dot H_{x}^{2}}\leq c.
\label{c6}
\end{equation}
The uniform $L^{\infty}([0,T];L^{2}(\R^{3}))$ bound on $\sqrt{\rl}\nabla\log\rl$ yields to
\begin{equation}
\label{c7}
\|\nabla\sqrt{\rl}\|_{L^{\infty}_{t}L^{2}_{x}}\leq c.
\end{equation}
Finally, from \eqref{ineq2}  it follows  
\begin{equation}
\|\rl-1\|_{L^{2}_{t}L^{2}_{x}}\leq c\lambda.
\label{c8}
\end{equation}

\section{Strong convergence of $\rl$ and $\sqrt{\rl}$}
Here by using  the bounds obtained in Section \ref{S-estimates} we are able to prove some results concerning the  convergence of $\rl$ and $\sqrt{\rl}$.
%\subsection{Convergence of $\rl$}
A straightforward consequence of \eqref{c8} is that
\begin{equation}
\rl -1 \longrightarrow 0\qquad \text {strongly in $L^{2}_{t}L^{2}_{x}$.}
\label{d1}
\end{equation}
\begin{equation}
\sqrt{\rl} -1 \longrightarrow 0\qquad \text {strongly in $L^{2}_{t}L^{2}_{x}$.}
\label{d1bis}
\end{equation}
Now by using together  \eqref{c3}, \eqref{c6} and the fact that 
$$\partial_{t}\rl=-\dive(\rl\ul)\qquad\text{is bounded in $L^{2}_{t}H^{-1}_{x}$},$$
we can apply the Lions-Aubin Lemma \ref{LA} to conclude that for any   compact set $K\subset \R^{3}$,
\begin{equation}
\begin{split}
&\rl\longrightarrow 1\qquad \text{strongly in}\\ 
&\hspace{2cm}\text{$L^{2/s}(0,T;H^{1+s}(K))\cap C(0,T;H^{s}(K))$, $0<s<1$,}
\end{split}
\label{d2}
\end{equation}
moreover, by taking $s>1/2$, the Rellich-Kondrachov Theorem implies
\begin{equation}
\rl\longrightarrow 1\qquad \text{uniformly in $[0,T]\times K$.}
\label{d2bis}
\end{equation}
%As a consequence,
%\begin{equation}
%\rl\longrightarrow 1\qquad \text{a.e on any compact set $K\subset \R^{3}$.}
%\label{d3}
%\end{equation}
Finally,  by  combining together \eqref{c3}, \eqref{c6} and \eqref{d2}  we get that
\begin{equation}
\nabla\rl\longrightarrow 0\qquad \text {strongly in $L^{2}_{t}L^{2}_{x}$.}
\label{d33}
\end{equation}
Now, by using \eqref{d1} and  \eqref{d33} we have that
\begin{equation}
\nabla\sqrt{\rl}\longrightarrow 0\qquad \text {strongly in $L^{2}(0,T;L^{2}_{loc}(\R^{3}))$.}
\label{d44}
\end{equation}

%\subsection{Convergence of $\sqrt{\rl}$}
%From \eqref{c7} and the mass conservation  we know that $\sqrt{\rl}$ is bounded in $L^{\infty}_{t}H^{1}_{x}$, next we observe that
%$$\partial_{t}\sqrt{\rl}=-\frac{1}{2\sqrt{\rl}}\dive(\rl\ul)=\frac{1}{2}\sqrt{\rl}\dive\ul-\dive(\sqrt{\rl}\ul)$$
%so by using \eqref{c1} and \eqref{c2} we have that $\partial_{t}\sqrt{\rl}$ is bounded in $L^{2}_{t}H^{-1}_{x}$. By applying the Lions-Aubin Lemma \ref{LA} we get  that
%\begin{equation}
%\sqrt{\rl} \longrightarrow 1\qquad \text{strongly in $L^{2}(0,T;L^{2}_{loc}(\R^{3}))$}
%\label{d333}
%\end{equation}
%%and
%%\begin{equation}
%%\sqrt{\rl}\longrightarrow 1\qquad \text{a.e on any compact set $K\subset \R^{3}$.}
%%\label{d4}
%%\end{equation}
%Next, 
%%since
%%$$\nabla\sqrt{\rl}=\frac{1}{1+\sqrt{\rl}}\nabla\rl+\frac{(\rl-1)\nabla\sqrt{\rl}}{(\sqrt{\rl}+1)^{2}},$$
%by using \eqref{d1} and  \eqref{d33} we have that
%\begin{equation}
%\nabla\sqrt{\rl}\longrightarrow 0\qquad \text {strongly in $L^{2}(0,T;L^{2}_{loc}(\R^{3}))$.}
%\label{d44}
%\end{equation}
%The next step in order to pass into the limit is to get some compactness for the momentum.

\section{Convergence and Compactness of $\rl\ul$}
The next step in our limit analysis is to get enough information  in  order to pass into the limit  in the convective term $\dive(\rl\ul\otimes\ul)$. Unfortunately the estimates of the Section \ref{S-estimates} are not enough to handle this nonlinear term. In fact from the bound \eqref{c1} we get only the weak convergence of $\sqrt{\rl}\ul$ and a new difficulty takes place concerning the loss of information on the gradient of $\ul$ (see the estimate \eqref{c2}) when vacuum appears. So it becomes involved to pass to the limit in the term $\rl\ul\otimes\ul$.  In order to deal with this  loss of information the goal of this section is to get some compactness for the momentum term $\rl\ul$. First of all, by combing together \eqref{c1}, \eqref{c5bis} and \eqref{c7} we have that
\begin{equation}
\rl\ul \quad \text {is uniformly bounded in $L^{\infty}([0,T];L^{p}_ {loc}(\R^{3})$, \quad $1\leq p\leq 3/2$.}
\label{m1}
\end{equation}
On the other hand from 
%the following identity
%$$\nabla(\rl\ul)=\rl\nabla\ul+\ul\nabla\rl=\sqrt{\rl}\sqrt{\rl}\nabla\ul+2\sqrt{\rl}\ul\nabla\sqrt{\rl}$$
%and
 \eqref{c1}, \eqref{c2}, \eqref{c7}, \label{d2bis} we get 
\begin{equation}
\nabla(\rl\ul) \qquad \text {is uniformly bounded in $L^{2}([0,T];L^{1}_{loc}(\R^{3})\cap L^{3/2}_{loc}(\R^{3}))$,}
\label{m2}
\end{equation}
hence, by using simoultaneously \eqref{m1} and \eqref{m2} we have
\begin{equation}
\rl\ul\qquad \text {is uniformly bounded in $L^{2}([0,T];W^{1,1}_{loc}(\R^{3})\cap W^{1,3/2}_{loc}(\R^{3}))$.}
\label{m22}
\end{equation}
The previous bound entails only the weak convergence of $\rl\ul$. In order to study the strong convergence we decompose the momentum term in its soleinoidal and gradient part, namely
$$\rl\ul={\bf H}(\rl\ul)+{\bf H}^{\bot}(\rl\ul)$$
and we analyze separately the convergence of these two terms.

\subsection{Compactness of the soleinoidal part ${\bf H}(\rl\ul)$} 
In order to get the compactness of ${\bf H}(\rl\ul)$, by using the Lions-Aubin Lemma \ref{LA} and \eqref{m22},  we need to show that $\partial_{t}{\bf H}(\rl\ul)$ is bounded in $L^{2}_{t}W^{-k,p}_{x}$ for some $k>0$ and $p\geq 1$. From the uniform bounds of Section \ref{S-estimates} we have 
\begin{equation}
\dive(\sqrt{\rl}\ul\otimes \sqrt{\rl}\ul),\  \nabla p(\rl) \in L^{\infty}_{t}W^{-1,1}_{x},
\label{m3}
\end{equation}
\begin{equation}
(\rl-1)\nabla\vl=\dive(\lambda\nabla\vl\otimes\lambda\nabla\vl)+\frac{\lambda^{2}}{2}\nabla|\nabla\vl|^{2}\in L^{\infty}_{t}W^{-1,1}_{x},
\label{m4}
\end{equation}
\begin{equation}
(\rl-1)\nabla\Delta\rl \in  L^{\infty}_{t}W^{-1,2}_{x}.   
\label{m5}
\end{equation}

By applying the Helmotz-Leray projector ${\bf H}$ to the momentum equation we are able to conclude 
\begin{equation}
\partial_{t}{\bf H}(\rl\ul) \quad\text{is bounded in $L^{2}([0,T];W^{-2,4/3}(\R^{3})$.}
\label{m6}
\end{equation}
Finally, \eqref{m6} and the Lemma \ref{LA} yields
\begin{equation}
{\bf H}(\rl\ul) \quad \text{is compact  in $L^{2}([0,T];L^{p}_{loc}(\R^{3}))$,  $1\leq p\leq 3/2$.}
\label{m7}
\end{equation}

Next, since $\sqrt{\rl}\ul$ is uniformly bounded in $L^{2}_{t}L^{2}_{x}$ we deduce that it converges weakly to some $\overline{m}\in L^{2}_{t}L^{2}_{x}$. This fact together with \eqref{d2} allows us to define a limit velocity $u$ as follows
\begin{equation}
\ul(t,x)=\frac{\sqrt{\rl}\ul}{\sqrt{\rl}}\rightharpoonup \overline{m}=u(t,x)\quad \text{in $ L^{2}_{t}L^{2}_{x}$ }.
\label{vel}
\end{equation}
%and by combing \eqref{d2}, together with  \eqref{c2} and \eqref{m77} we have for a.e $t\in [0,T]$
%\begin{equation*}
%\ul(t,\cdot) \longrightarrow \overline{m} \quad \text{strongly in $ L^{2}_{loc}$ }.
%%\label{vel}
%\end{equation*}
Hence by passing into the limit inside the conservation of mass equation $\eqref{3.1}_{1}$ we get 
\begin{equation}
\dive u=0\qquad \text{in $\mathcal{D}'((0,T)\times \R^{3})$.}
\label{diverg}
\end{equation}
By using together  \eqref{d2}, \eqref{m7}, \eqref{diverg}, it follows
\begin{equation}
{\bf H}(\rl\ul)\longrightarrow {\bf H}u=u \quad \text{strongly  in $L^{2}([0,T];L^{p}_{loc}(\R^{3}))$,  $1\leq p\leq 3/2$.}
\label{m77}
\end{equation}

\subsection{Convergence of  ${\bf H}^{\bot}(\rl\ul)$}

Let us  define the density fluctuation in the usual way
\begin{equation}
\sls=\frac{\rl-1}{\lambda}
\label{fluct}
\end{equation}
then,  by the  identity
\begin{equation}
{\bf H}^{\bot}(\rl\ul)=-\lambda\nabla\Delta^{-1}\partial_{t}\sls,
\label{m8}
\end{equation}
we can deduce that the convergence of ${\bf H}^{\bot}(\rl\ul)$ is strictly related to the one of the density fluctuation. 
As  mentioned in the introduction the weak convergence of the the gradient part of $\rl\ul$ is induced by the so called acoustic waves. In fact as we will see in this section the density fluctuation exhibits very fast oscillating waves in time (the so called plasma oscillation). In order to control this high frequency waves  we will recover the acoustic equation satisfied by $\sls$, we  show that it enjoys various dispersive properties which will enable us to estimate the density fluctuation $\sls$ uniformly with respect to $\la$. 

Let us rewrite the system \eqref{3.1} in the following way
\begin{align}
\partial_{t}\sls+\frac{1}{\lambda}\dive(\rl \ul)&=0\label{4.2.1}\\
\partial_{t}(\rl\ul)+\frac{1}{\lambda}\nabla\sls&=\dive(\rl D(\ul))-\dive(\rl \ul\otimes \ul)-\nabla p(\rl)\notag\\
&+\frac{1}{\lambda}\nabla\sls+(\rl-1)\nabla \vl+\nabla \vl+\rl\nabla\Delta\rl,
\label{4.2.2}\\
\la\Delta \vl&=\sls.
\label{4.2.2bis}
\end{align}
Then, by differentiating with respect to time the equation \eqref{4.2.1}, by taking the divergence of \eqref{4.2.2} and by using \eqref{4.2.2bis} we get that $\sls$ satisfies the following equation
\begin{align}
\lambda^{2}\partial_{tt}{\sls}-\Delta \sls&+\sls=-\lambda\dive\dive\left(\rl D(\ul)-\rl \ul\otimes \ul\right)\label{4.2.3}\\
&-\lambda\dive\left(\!-\nabla p(\rl)+\frac{1}{\la}\nabla\sls+(\rl-1)\nabla \vl+\rl\nabla\Delta\rl\!\!\right).\notag
\end{align}
It turns out that \eqref{4.2.3} is a nonhomogeneous Klein Gordon equation with mass $1/\la$, in order to get uniform bounds on the fluctuation $\sls$ 
we have to take into account the combined description of dispersion and high frequency time oscillations provided by the Strichartz estimates \eqref{s2}.  
In order to make  the equation \eqref{4.2.3} more easier to handle, we rescale the time  variable, the density fluctuation, the velocity and the electric potential in the following way 
\begin{align}
\tau&=\frac{t}{\la}, \quad y=x\label{4.2.4}\\ 
\ut(y,\tau)&=\ul(y,\la\tau), \quad  \rt(y,t)=\rl(y,\la\tau)\notag\\
\st(y,\tau)&=\sls (y,\la\tau),
\quad \Vt(y,\tau)=\vl(y,\la\tau).\label{4.2.5}
\end{align}
As a consequence of this scaling the Klein Gordon equation \eqref{4.2.3} becomes, 
\begin{align}
\partial_{\tau\tau}{\st}-\Delta \st+\st&=\tilde F
\label{k2}
\end{align}
where 
\begin{equation}
\begin{split}
\tilde F&=-\lambda\dive\left(\dive(\rt D(\ut))-\dive(\rt \ut\otimes \ut)-\nabla p(\rt)+(\rt-1)\nabla\Vt\right)\\
&-\lambda\dive\left((\rt-1)\nabla\Delta\rt\right)-\lambda\dive\left(\nabla\Delta\rt+\lambda^{-1}\nabla\st\right)\\
&=\tilde F_{1}+\tilde F_{2}+\tilde F_{3}.
\end{split}
\label{k3}
\end{equation}
By using the uniform bounds of the Section 4, the Poisson equations \eqref{4.2.2bis}, for any $s_{0}\geq3/2$ we have
\begin{equation}
\label{k4}
\begin{split}
\tilde F_{1}=-\lambda\dive\Big(\dive(\rt& D(\ut))-\dive(\rt \ut\otimes \ut)-\nabla p(\rt)\\
&+\dive(\lambda\nabla\Vt\otimes\lambda\nabla\Vt)-\frac{1}{2}\nabla|\la\nabla\Vt|^{2}\Big)\in L^{\infty}_{\tau}H^{-s_{0}-2}_{y},
\end{split}
\end{equation}\\
\begin{equation}
\tilde F_{2}=-\lambda^{2}\dive\nabla(\st\Delta\rl)-\lambda\dive(\nabla\rt\Delta\rt)
\in L^{1}_{t}H^{-s_{0}-2}_{x}+L^{2}_{\tau}H^{-s_{0}-1}_{y},
\label{k5}
\end{equation}\\
\begin{equation}
\tilde F_{3}=-\lambda\dive(\nabla\Delta\rt)-\dive(\nabla\st) \in L^{2}_{\tau}H^{-2}_{y}.
\label{k6}
\end{equation}
Then the following estimate on $\sls$ holds.
\begin{theorem}
Let us consider the solutions $(\rl, \ul, \vl)$ of the Cauchy problem for the system \eqref{3.1} with initial data satisfying \eqref{initial}. Then, for any $s_{0}\geq 3/2$,  the following estimate holds
\begin{align}
&\la^{-\frac{1}{4}}\|\sls\|_{L^{4}_{t} W^{-s_{0}-2,4}_{x}}
+\la^{\frac{3}{4}}\|\partial_{t}\sls\|_{L^{4}_{t} W^{-s_{0}-3,4}_{x}}\notag\\
&+\|\sls\|_{C_{t}H^{-3/2-s_{0}}_{x}}+\lambda\|\partial_{t}\sls\|_{C_{t}H^{-5/2-s_{0}}_{x}}\notag\\
&\lesssim \|\sls_{0}\|_{H^{-1}_{x}}+\|m^{\la}_{0}\|_{H^{-1}_{x}}\notag\\
&+T\|\dive(\dive(\rl D(\ul))-\dive( \rl\ul\otimes\ul)-\nabla p(\rl)+(\rl-1)\nabla\vl)\|_{L^{\infty}_{t}H^{-s_{0}-2}_{x}}\notag\\
&+\lambda\|\dive\nabla(\sls\Delta\rl)\|_{L^{1}_{t}H^{-s_{0}-2}_{x}}+\sqrt{T}\|\dive(\nabla(\rl-1)\Delta\rl)\|_{L^{2}_{t}H^{-s_{0}-1}_{x}}\notag\\
&+\sqrt{T}\|\dive(\nabla\Delta\rl+\lambda^{-1}\nabla(\rl-1))\|_{L^{2}_{t}H^{-2}_{x}}.
\label{k7}
\end{align}
\end{theorem}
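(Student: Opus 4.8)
The plan is to apply the scaled Klein--Gordon Strichartz estimate \eqref{s2} to the equation \eqref{k2} and then undo the time rescaling \eqref{4.2.4}--\eqref{4.2.5} to recover a bound on $\sls$ in the original variables. First I would record that, by \eqref{4.2.1} and \eqref{4.2.2bis}, the Cauchy data for the rescaled fluctuation $\st$ are $\st(0,\cdot)=\sls_0$ and $\partial_\tau\st(0,\cdot)=-\dive(\rl_0\ul_0)=-\dive m^\la_0$; thus $\|\st(0,\cdot)\|_{\dot H^{s}_x}\lesssim\|\sls_0\|_{H^{-1}_x}$ and $\|\partial_\tau\st(0,\cdot)\|_{\dot H^{s-1}_x}\lesssim\|m^\la_0\|_{H^{-1}_x}$ for a suitable choice of $s$ (taking $s=-1-s_0$ so that $s+1/2=-1/2-s_0$ and matching against $H^{-1}_x$ after absorbing the finitely many extra derivatives into the $H^{-s_0-2}$-type norms of $\tilde F$). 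Applying \eqref{s2} with this $s$ gives, on the $\tau$-interval $[0,T/\la]$,
\begin{align*}
\|\st\|_{L^4_\tau W^{s,4}_x}+\|\partial_\tau\st\|_{L^4_\tau W^{s-1,4}_x}
+\|\st\|_{C_\tau \dot H^{1/2+s}_x}+\|\partial_\tau\st\|_{C_\tau \dot H^{-1/2+s}_x}
\lesssim \|\sls_0\|_{H^{-1}_x}+\|m^\la_0\|_{H^{-1}_x}+\|\tilde F\|_{L^1_\tau \dot H^{s}_x}.
\end{align*}

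Next I would estimate the three pieces of $\tilde F$ from \eqref{k3} using the bounds \eqref{k4}--\eqref{k6}. Here it is essential to keep track of the powers of $\la$ produced both by the explicit $\la$ factors in \eqref{k3} and by converting $L^1_\tau$ and $L^2_\tau$ norms on $[0,T/\la]$ back to $L^\infty_t$, $L^1_t$, $L^2_t$ norms on $[0,T]$: since $d\tau=dt/\la$, one has $\|g\|_{L^1_\tau}=\la^{-1}\|g(\cdot/\la)\|_{L^1_t}$ and $\|g\|_{L^2_\tau}=\la^{-1/2}\|g(\cdot/\la)\|_{L^2_t}$, while an $L^\infty_t$ bound over $[0,T]$ costs a factor $T/\la$ when integrated in $\tau$. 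Tracking these factors term by term yields, for $\tilde F_1$ a contribution $\lesssim T\,\|\dive(\dive(\rl D(\ul))-\dive(\rl\ul\otimes\ul)-\nabla p(\rl)+(\rl-1)\nabla\vl)\|_{L^\infty_t H^{-s_0-2}_x}$, for $\tilde F_2$ the two contributions $\la\|\dive\nabla(\sls\Delta\rl)\|_{L^1_t H^{-s_0-2}_x}+T\|\dive(\nabla(\rl-1)\Delta\rl)\|_{L^2_t H^{-s_0-1}_x}$, and for $\tilde F_3$ the contribution $\sqrt T\,\|\dive(\nabla\Delta\rl+\la^{-1}\nabla(\rl-1))\|_{L^2_t H^{-2}_x}$; these are exactly the last four terms on the right-hand side of \eqref{k7}.

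Finally I would translate the left-hand side back. Undoing \eqref{4.2.4}, a mixed norm $\|\st\|_{L^4_\tau W^{\sigma,4}_x}$ over $[0,T/\la]$ equals $\la^{-1/4}\|\sls\|_{L^4_t W^{\sigma,4}_x}$ over $[0,T]$; similarly $\|\partial_\tau\st\|_{L^4_\tau W^{\sigma-1,4}_x}=\la\cdot\la^{-1/4}\|\partial_t\sls\|_{L^4_t W^{\sigma-1,4}_x}$ because $\partial_\tau=\la\partial_t$, which produces the factor $\la^{3/4}$; and the $C_\tau$ norms are invariant in time, with $\partial_\tau\st$ again carrying an extra $\la$. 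Choosing the index $s$ so that $1/2+s=-3/2-s_0$ (i.e.\ $s=-2-s_0$, consistent with the derivative counting in the source estimates above) reproduces precisely the four quantities on the left of \eqref{k7} with the stated powers of $\la$. Collecting everything gives the theorem.

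The main obstacle I expect is the careful bookkeeping of the $\la$ powers: one must simultaneously handle (a) the time-rescaling Jacobians, which differ between the $L^1_\tau$, $L^2_\tau$ and $C_\tau$ norms, (b) the explicit $\la$ and $\la^2$ prefactors hidden in $\tilde F_1,\tilde F_2,\tilde F_3$, and (c) the $\partial_\tau=\la\partial_t$ conversion on the velocity-type and $\partial_t\sls$ terms, while also fixing the Sobolev index $s$ in \eqref{s2} consistently across the data term, the forcing term, and every term on the left. A secondary technical point is to justify that \eqref{s2}, stated for data in homogeneous spaces, applies after absorbing the inhomogeneous $H^{-1}_x$ data bounds for $\sls_0$ and $m^\la_0$ (the latter via Remark \ref{r2}) into the relevant homogeneous norms at the chosen negative regularity, which is routine since we are working at negative Sobolev exponents where low frequencies are harmless.
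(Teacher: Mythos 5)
Your proposal follows essentially the same route as the paper: apply the Klein--Gordon Strichartz estimate \eqref{s2} with $s=-s_{0}-2$ to the time-rescaled equation \eqref{k2} on $\tau\in(0,T/\la)$, bound the forcing via \eqref{k4}--\eqref{k6} with the same bookkeeping of the $\la$-factors from the $L^{1}_{\tau}$, $L^{2}_{\tau}$ and $C_{\tau}$ conversions, treat the data through Remark \ref{r2} and $\sls_{0}=\la\Delta\Phi^{\la}_{0}\in H^{-1}_{x}$, and undo the scaling to obtain \eqref{k7}. The only blemishes are minor: the momentary mention of $s=-1-s_{0}$ before settling on the correct $s=-s_{0}-2$, and the parenthetical claim that low frequencies are ``harmless'' at negative homogeneous Sobolev indices (they are in fact the delicate part of passing from $H^{-1}_{x}$ data to $\dot H$-norms), a point the paper itself glosses over in the same way.
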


\begin{proof}
By using the bounds \eqref{k4}-\eqref{k6} and in the same spirit of \cite{DM12},  we apply the Strichartz estimate \eqref{s2} with $(y,\tau)\in \R^{3}\times(0,T/\la)$ to the scaled Klein Gordon equation \eqref{k2} and we get 
\begin{align*}
&\|\st\|_{L^{4}_{\tau} W^{-s_{0}-2,4}_{y}}+\|\partial_{\tau}\st\|_{L^{4}_{\tau} W^{-s_{0}-3,4}_{y}}\\
&+\|\st\|_{C_{\tau}H^{-3/2-s_{0}}_{y}}+\|\partial_{t}\st\|_{C_{\tau}H^{-5/2-s_{0}}_{y}}\\&\lesssim 
\|\st_{0}\|_{H^{-3/2-s_{0}}_{y}}+\|\partial_{\tau}\st_{0}\|_{H^{-5/2-s_{0}}_{y}}\notag\\
&+T\|\dive (\dive(\rt\ut\otimes\ut)+\nabla p(\rt)-\dive(\rt D(\ut))-(\rt-1)\nabla\Vt)\|_{L^{\infty}_{\tau}H^{-s_{0}-2}_{y}}\notag\\
&+\lambda^{2}\|\dive\nabla(\st\Delta\rt)\|_{L^{1}_{\tau}H^{-s_{0}-2}_{y}}+\sqrt{\lambda}\sqrt{T}\|\dive(\nabla(\rt-1)\Delta\rt)\|_{L^{2}_{\tau}H^{-s_{0}-1}_{y}}\notag\\
&+\sqrt{\lambda}\sqrt{T}\|\dive(\nabla\Delta\rt+\lambda^{-1}\nabla(\rt-1)\|_{L^2_{\tau}H^{-2}_{y}}.
\label{k8}
\end{align*}
Finally,  since 
\begin{equation*}
\|\st\|_{L^{q}_{\tau}W^{k,p}_{y}}=\la^{-\frac{1}{q}}\|\st\|_{L^{q}_{t}W^{k,p}_{x}}
\end{equation*}
by  using that $\partial_{t}\sigma_{0}=m_{0}$ together with the Remark \ref{r2} and  $\sigma_{0}=\lambda\Delta\Phi_{0}\in H^{-1}_{x}$ we end up with \eqref{k7}.
\end{proof}
Going back to \eqref{m8} we get 
\begin{equation*}
\begin{split}
\|{\bf H}^{\bot}(\rl\ul)\|_{L^{4}_{t}W^{-s_{0}-2,4}_{x}}&\leq\lambda^{1/4}\|\la^{3/4}\nabla\Delta^{-1}\partial_{t}\sls\|_{L^{4}_{t}W^{-s_{0}-2,4}_{x}}\\
&\leq
\lambda^{1/4}\|\la^{3/4}\partial_{t}\sls\|_{L^{4}_{t}W^{-s_{0}-3,4}_{x}},
\end{split}
\end{equation*}
and by using \eqref{k7} we end up with, 
\begin{equation}
{\bf H}^{\bot}(\rl\ul)\longrightarrow 0 \quad \text{strongly in $L^{4}_{t}W^{-s_{0}-2,4}_{x}$, for any $s_{0}\geq 3/2$.}
\label{m9bis}
\end{equation}
By using \eqref{m2}, \eqref{m8} and \eqref{m9bis}, by standard interpolation (see Theorem 6.4.5, in \cite{BL76}) it follows
\begin{equation}
{\bf H}^{\bot}(\rl\ul)\longrightarrow 0 \quad \text{strongly in $L^{q}([0,T];L^{p}_{loc}(\R^{3}))$}, 
\label{m9}
\end{equation}
where $\displaystyle{q=\frac{4(s_{0}+3)}{2s_{0}+5}}$,  $\displaystyle{p=\frac{12(s_{0}+3)}{8s_{0}+19}}$, for any $s_{0}\geq 3/2$.

\section{Convergence of the electric field}

This section is devoted to the study of the convergence of the electric field $\el=\nabla\vl$. By the a priori estimate \eqref{c4} we  know that  $\la \el$ is bounded in $L^{\infty}_{t}L^{2}_{x}$ which does not give enough information to pass into the limit in the quadratic term $(\rl-1)\nabla\vl\sim\dive(\la\el\otimes \la\el)-1/2\nabla|\la\el |^{2}$, appearing in the righthand side of $\eqref{3.1}_{2}$. Hence, the problem now, is how to recover the weak continuity of this quadratic forms in $L^{2}$. A way  to recover some weak continuity for scalar product of $L^{2}$ sequences is given by a  compensated compactness tool  as the div-curl lemma. For this purpose we have to recover compactness in space and time.
A key observation follows from the Poisson equation $\eqref{3.1}_{3}$ written in terms of electric field and density fluctuation
\begin{equation}
\la\el=\nabla\Delta^{-1}\sls,
\label{e1}
\end{equation}
where,  by using \eqref{c4} and \eqref{c8} we have 
\begin{equation}
\la\el\qquad \text{is bounded in $L^{2}(0,T;H^{1}(\R^{3}))$.}
\label{e2}
\end{equation}
The previous bound gives us compactness in space but not in time. On the other hand the dispersion of the acoustic equation of Klein Gordon type does not gives us sufficient information.
One way to overcome this further difficulty would be to exploit in a  better way the dispersive behavior of all the terms appearing in the momentum equation. In the previous section we focused on the dispersion given by the combination of the electric field and the Poisson equation, now we are going to exploit the dispersive properties induced by  the capillarity term $\rl\nabla\Delta\rl$. This will be done in the next section.

\subsection{Beam equation for the density fluctuation}

We rewrite the system \eqref{3.1} in the following way
\begin{align}
&\partial_{t}\sls+\frac{1}{\lambda}\dive(\rl \ul)=0\label{e3}\\
\partial_{t}(\rl\ul)&=\dive(\rl D(\ul))-\dive(\rl \ul\otimes \ul)-\nabla p(\rl)\notag\\
&+(\rl-1)\nabla \vl+\nabla \vl+(\rl-1)\nabla\Delta\rl+\nabla\Delta\rl,
\label{e4}\\
\la\Delta \vl&=\sls.
\label{e5}
\end{align}
By differentiating \eqref{e3} with respect to time   and by taking the divergence of \eqref{e4} we get 
\begin{equation}
\begin{split}
\partial_{tt}{\sls}+\Delta^{2} \sls+\frac{1}{\lambda^{2}}\sls=&-\frac{1}{\lambda}\dive\left(\rl D(\ul) -\dive(\rl \ul\otimes \ul)-\nabla p(\rl)\right)\\
&-\frac{1}{\lambda}\dive\left((\rl-1)\nabla \vl+(\rl-1)\nabla\Delta\rl\right).
\end{split}
\label{e6}
\end{equation}
The equation \eqref{e6} goes under the name of Beam equation. In order to handle it in a more easier way  we rescale the time and space variables, the density fluctuation, the velocity and the electric potential in the following way 

\begin{align}
\tau&=\frac{t}{\la}, \quad y=\frac{x}{\sqrt{\la}}\label{4.2.4a}\\ 
\utt(y,\tau)&=\ul(\sqrt{\la} y,\la\tau), \quad  \rtt(y,t)=\rl(\sqrt{\la}y,\la\tau)\notag\\
\stt(y,\tau)&=\sls (\sqrt{\la}y,\la\tau),
\quad \Vtt(y,\tau)=\vl(\sqrt{\la}y,\la\tau).\label{4.2.5b}
\end{align}
Then the equation \eqref{e6} becomes

\begin{equation}
\partial_{\tau \tau}{\stt}+\Delta^{2} \stt+\stt=\tilde{\tilde F}
\label{e7}
\end{equation}
where
\begin{equation}
\begin{split}
\tilde{\tilde F}&=-\dive\left(\dive(\rtt D(\utt)) -\dive(\rtt \utt\otimes \utt)-\nabla p(\rtt)\right)
\\&-\dive\left((\rtt-1)\nabla \Vtt\right)-\frac{1}{\lambda}\dive\left((\rtt-1)\nabla\Delta\rtt\right)\\
&={\tilde{\tilde F}}_{1}+{\tilde{\tilde F}}_{2}+{\tilde{\tilde F}}_{3}.
\end{split}
\label{e8}
\end{equation}

By taking into account the scaling \eqref{4.2.4a} and \eqref{4.2.5b} and the uniform bounds of Section \ref{S-estimates}  for any $s_{0}\geq 3/2$ we have 

\begin{equation}
{\tilde{\tilde F}}_{1}=-\dive(\dive(\rtt D(\utt))-\dive(\rtt \utt\otimes \utt)-\nabla p(\rtt)) \in L^{\infty}_{\tau}H^{-s_{0}-2}_{y},
\label{e9}
\end{equation}\\
\begin{equation}
{\tilde{\tilde F}}_{2}=\dive(\dive(\sqrt{\lambda}\nabla\Vtt\otimes\sqrt{\lambda}\nabla\Vtt)+1/2\nabla|\sqrt{\la}\nabla\Vtt|^{2})\in L^{\infty}_{\tau}H^{-s_{0}-2}_{y},
\label{e10}
\end{equation}\\
\begin{equation}
\begin{split}
\lambda\tilde F_{3}=-\dive\nabla((\rtt-1)\Delta\rtt)-\dive(\nabla\rtt\Delta\rtt)\in L^{1}_{\tau}H^{-s_{0}-2}_{y}+L^{2}_{\tau}H^{-s_{0}-2}_{y}
\end{split}
\label{e11}
\end{equation}

By using the Strichartz estimates for the Beam equation we are able to prove the following theorem

\begin{theorem}
Let us consider the solutions $(\rl, \ul, V^{\la})$ of the Cauchy problem for the system \eqref{3.1} with initial data satisfying \eqref{initial}. Then for any $s_{0}\geq 3/2$,  the following estimate holds
\begin{align}
&\la^{-\frac{1}{4}-\frac{5}{2q}}\|\sls\|_{L^{q}_{t} \dot W^{-s_{0}-2,q}_{x}}
+\|\sls\|_{C(0,T;\dot H^{-s_{0}-2}_{x})}+\|\partial_{t}\sls\|_{C(0,T;\dot H^{-s_{0}-4}_{x})}\notag\\
&\lesssim \la \|\sls_{0}\|_{H^{-1}_{x}}+\|m^{\la}_{0}\|_{H^{-1}_{x}}\notag\\
&+T\|\dive(\dive( \rl\ul\otimes\ul)-\nabla p(\rl)+\dive(\rl D\ul))\|_{L^{\infty}_{t}\dot H^{-s_{0}-2}_{x}}\notag\\
&+T\|\dive (\dive(\la\nabla\vl\otimes\nabla\vl)+\frac{1}{2}\nabla|\la\nabla\vl|^{2})\|_{L^{\infty}_{t}\dot H^{-s_{0}-2}_{x}}\notag\\
&+\|\la^{-1}\dive\nabla((\rl-1)\Delta\rl)\|_{L^{1}_{t}\dot H^{-s_{0}-2}_{x}}+\sqrt{T}\sqrt{\la}\|\dive\nabla\rt\Delta\rt\|_{L^{2}_{t}\dot H^{-2}_{x}}.
\label{e12}
\end{align}
\end{theorem}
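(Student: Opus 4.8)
The estimate \eqref{e12} should be obtained by applying the Beam Strichartz estimate \eqref{s3} to the rescaled fluctuation $\stt$, which by construction solves the inhomogeneous beam equation \eqref{e7} on $\R^{3}\times(0,T/\la)$, and then transporting the resulting bounds back to the physical variables through the anisotropic change of variables $\tau=t/\la$, $y=x/\sqrt{\la}$ of \eqref{4.2.4a}--\eqref{4.2.5b}. Concretely, I would first apply \eqref{s3} with regularity index $s=-s_{0}-2$ and an admissible exponent $q\ge 2+\tfrac{8}{3}$ to \eqref{e7}, which yields
\begin{equation*}
\|\stt\|_{L^{q}_{\tau}\dot W^{-s_{0}-2,q}_{y}}+\|\stt\|_{C_{\tau}\dot H^{-s_{0}-2}_{y}}+\|\partial_{\tau}\stt\|_{C_{\tau}\dot H^{-s_{0}-4}_{y}}\lesssim\|\stt_{0}\|_{\dot H^{-s_{0}}_{y}}+\|\partial_{\tau}\stt_{0}\|_{\dot H^{-s_{0}-2}_{y}}+\|\tilde{\tilde F}\|_{L^{1}_{\tau}\dot H^{-s_{0}-2}_{y}},
\end{equation*}
with $\tilde{\tilde F}=\tilde{\tilde F}_{1}+\tilde{\tilde F}_{2}+\tilde{\tilde F}_{3}$ as in \eqref{e8}.

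\textbf{Data and forcing.} For the Cauchy data I would use $\stt_{0}=\sls_{0}(\sqrt{\la}\,\cdot)$ together with the Poisson identity $\sls_{0}=\la\Delta\Phi^{\la}_{0}=\dive(\la\nabla\Phi^{\la}_{0})$ and the bound $\la\nabla\Phi^{\la}_{0}\in L^{2}$ from \eqref{initial}; for the velocity datum, the continuity equation at $t=0$ gives $\partial_{\tau}\stt_{0}=-(\dive m^{\la}_{0})(\sqrt{\la}\,\cdot)$, with $m^{\la}_{0}$ bounded in $H^{-1}$ by Remark \ref{r2}. For the forcing I would feed in the scaled uniform bounds \eqref{e9}, \eqref{e10}, \eqref{e11} coming from Section \ref{S-estimates}, namely $\tilde{\tilde F}_{1},\tilde{\tilde F}_{2}\in L^{\infty}_{\tau}\dot H^{-s_{0}-2}_{y}$ and $\la\tilde{\tilde F}_{3}\in L^{1}_{\tau}\dot H^{-s_{0}-2}_{y}+L^{2}_{\tau}\dot H^{-s_{0}-2}_{y}$; since the time interval is $(0,T/\la)$ while \eqref{s3} asks for the $L^{1}_{\tau}$ norm of the forcing, I would pass from the $L^{\infty}_{\tau}$ and $L^{2}_{\tau}$ bounds to $L^{1}_{\tau}$ by H\"older in time, picking up the factors $T/\la$ and $(T/\la)^{1/2}$.

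\textbf{Undoing the scaling.} The last step is to rewrite each norm in the $(x,t)$ variables by means of the identities produced by $x=\sqrt{\la}\,y$, $t=\la\tau$, namely, for any $k\in\R$,
\begin{equation*}
\|\stt\|_{L^{q}_{\tau}\dot W^{k,q}_{y}}=\la^{\frac{k}{2}-\frac{5}{2q}}\|\sls\|_{L^{q}_{t}\dot W^{k,q}_{x}},\qquad\|\stt\|_{C_{\tau}\dot H^{k}_{y}}=\la^{\frac{k}{2}-\frac{3}{4}}\|\sls\|_{C_{t}\dot H^{k}_{x}},
\end{equation*}
and the analogous ones for $\partial_{\tau}\stt$, for $\stt_{0}$ and $\partial_{\tau}\stt_{0}$, and for each $\tilde{\tilde F}_{j}$ after reexpressing it through the unscaled $\rl,\ul,\vl$. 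Collecting all the powers of $\la$ — those from the two rescalings acting on $\stt$, $\partial_{\tau}\stt$ and the data, those from the H\"older-in-time reductions on $(0,T/\la)$, and the explicit $\la^{-1}$ multiplying the capillarity source $\tilde{\tilde F}_{3}$ — and checking that they combine exactly into the prefactors displayed in \eqref{e12} is the genuinely delicate point; the rest is bookkeeping.

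\textbf{Main obstacle.} I expect the accounting of the $\la$-powers to be the hard part, precisely because the scaling here is the parabolic-type one $y=x/\sqrt{\la}$ rather than the isotropic $y=x$ used for the Klein--Gordon estimate \eqref{k7}: one must verify that this scaling produces exactly the exponent $-\tfrac{1}{4}-\tfrac{5}{2q}$ on the left-hand side of \eqref{e12}, that the capillarity term keeps its prefactor $\la^{-1}$ on the right, and that all remaining forcing and data terms end up $\la$-uniform (or with a harmless positive power of $\la$), so that \eqref{e12} is a genuine uniform bound as $\la\to 0$.
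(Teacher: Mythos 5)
Your strategy coincides with the paper's own proof: apply the beam Strichartz estimate \eqref{s3} to the rescaled equation \eqref{e7} on $\R^{3}\times(0,T/\la)$, feed in the bounds \eqref{e9}--\eqref{e11} (passing to $L^{1}_{\tau}$ by H\"older on the long time interval), treat the data through the Poisson equation and Remark \ref{r2}, and undo the anisotropic scaling with exactly the identity the paper uses, $\|\stt\|_{L^{q}_{\tau}\dot W^{k,q}_{y}}=\la^{\frac{k}{2}-\frac{5}{2q}}\|\sls\|_{L^{q}_{t}\dot W^{k,q}_{x}}$. The step you defer --- collecting the powers of $\la$ into the prefactors of \eqref{e12} --- is precisely the final bookkeeping the paper performs with that scaling identity, so your proposal is correct and follows essentially the same route.
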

\begin{proof}
By using the bounds \eqref{e9}-\eqref{e11}, as in the previous section, we  apply the Strichartz estimate \eqref{s3} with $(y,\tau)\in \R^{3}\times(0,T/\la)$ to the scaled Beam equation \eqref{e7} and we get that $\stt$ verifies
\begin{align*}
&\|\stt\|_{L^{q}_{\tau} W^{-s_{0}-2,q}_{y}}+\|\stt\|_{C(0,T;H^{-s_{0}-2}_{y})}+\|\partial_{t}\stt\|_{C(0,T;H^{-s_{0}-4}_{y})}\notag\\&\lesssim 
\|\st_{0}\|_{H^{-s_{0}}_{y}}+\|\partial_{\tau}\st_{0}\|_{H^{-s_{0}-2}_{y}}\notag\\
&+\frac{T}{\lambda}\|\dive (\dive(\rtt\utt\otimes\utt)-\nabla p(\rtt)+\rtt D(\utt))\|_{L^{\infty}_{\tau}H^{-s_{0}-2}_{y}}\notag\\
&+\frac{T}{\lambda}\|\dive(\dive(\sqrt{\la}\nabla\Vtt\otimes\sqrt{\lambda}\nabla\Vtt)+\frac{1}{2}\nabla|\sqrt{\lambda}\nabla\Vtt|^{2})\|_{L^{\infty}_{\tau}H^{-s_{0}-2}_{y}}\notag\\
&+\frac{1}{\sqrt{\lambda}}\|\dive\nabla\left(\frac{(\rt-1)}{\lambda^{1/4}}\frac{\Delta\rtt}{\lambda^{1/4}}\right)\|_{L^{1}_{\tau}H^{-s_{0}-2}_{y}}+\frac{\sqrt{T}}{\sqrt{\lambda}\lambda^{1/4}}\|\dive\left(\frac{\nabla\rtt}{\sqrt{\la}}\frac{\Delta\rtt}{\lambda^{1/4}}\right)\|_{L^{2}_{\tau}H^{-s_{0}-2}_{y}}\notag\\
\label{e13}
\end{align*}
Finally,  since 
\begin{equation*}
\|\st\|_{L^{q}_{\tau}W^{k,p}_{y}}=\la^{-\frac{1}{q}+\frac{k}{2}-\frac{3}{2p}}\|\se\|_{L^{q}_{t}W^{k,p}_{x}}
\end{equation*}
and, as before, by using the Remark \ref{r2} and the fact that $\sigma_{0}=\lambda\Delta\Phi_{0}\in H^{-1}_{x}$ we end up with \eqref{e12}.
\end{proof}

With the uniform estimate \eqref{e12} we are able to prove the following compactness results concerning $\lambda\nabla\vl$.
\begin{proposition}
Let $(\rl,\ul, \vl)$ be  a sequence of solutions of the Navier Stokes Korteweg Poisson system \eqref{3.1} which satisfy \eqref{initial}, then it holds
\begin{equation}
\lambda\nabla\vl\otimes\lambda\nabla\vl \rightharpoonup 0 \qquad \text{in $\mathcal{D}'([0,T]\times \R^{3})$}.
\label{e14}
\end{equation} 
\label{pelectric}
\end{proposition}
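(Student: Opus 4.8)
The plan is to deduce the weak vanishing of the quadratic electric stress from a \emph{strong} convergence of the scaled field $\la\el=\la\nabla\vl$ in a negative Sobolev norm, together with its uniform boundedness in a positive one. The starting point is the Poisson identity \eqref{e1}, $\la\el=\nabla\Delta^{-1}\sls$, and its time derivative $\partial_{t}(\la\el)=\nabla\Delta^{-1}\partial_{t}\sls$. First I would observe that the right-hand side of \eqref{e12} is bounded uniformly in $\la$; this is immediate from the uniform bounds \eqref{c1}--\eqref{c8} and Remark \ref{r2}, the only point worth stressing being that $\la^{-1}(\rl-1)\Delta\rl$ stays bounded in $L^{1}_{t}L^{1}_{x}$ because of \eqref{c8} and \eqref{c6}. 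Feeding this into \eqref{e12} gives, for the admissible exponent $q$ of that estimate,
$$\|\sls\|_{L^{q}_{t}\dot W^{-s_{0}-2,q}_{x}}\lesssim \la^{\frac14+\frac{5}{2q}},\qquad \|\partial_{t}\sls\|_{C_{t}\dot H^{-s_{0}-4}_{x}}\lesssim 1 .$$
Applying $\nabla\Delta^{-1}$, which raises the Sobolev index by one and is bounded on $\dot W^{k,q}_{x}$ for $1<q<\infty$, we conclude that $\la\el\to0$ strongly in $L^{q}_{t}\dot W^{-s_{0}-1,q}_{x}$, hence in $\mathcal D'([0,T]\times\R^{3})$, and that $\partial_{t}(\la\el)$ is bounded in $C_{t}\dot H^{-s_{0}-3}_{x}$ \emph{uniformly in $\la$}. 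It is precisely here that the beam dispersion of the capillarity term is used: the weaker Klein--Gordon estimate \eqref{k7} only controls $\la\,\partial_{t}\sls$, not $\partial_{t}\sls$ itself.

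Next I would turn the distributional convergence into a strong one in a space where the product is meaningful. By the energy bound \eqref{c4}, $\la\el$ is bounded in $L^{\infty}_{t}L^{2}_{x}$, in particular in $L^{2}_{t}L^{2}_{x}$; together with the uniform bound on $\partial_{t}(\la\el)$ just obtained (so, bounded in $L^{2}_{t}H^{-s_{0}-3}_{loc}$) and the compactness of the embedding of $L^{2}$ into $H^{-1}$ on cubes, the Lions--Aubin Lemma \ref{LA} gives that $\{\la\el\}$ is relatively compact in $L^{2}([0,T];H^{-1}_{loc}(\R^{3}))$. Since its only distributional limit is $0$, we get $\la\el\to0$ strongly in $L^{2}([0,T];H^{-1}_{loc}(\R^{3}))$. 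On the other hand, \eqref{e2} and \eqref{c4} together show that $\la\el$ is bounded in $L^{2}([0,T];H^{1}_{loc}(\R^{3}))$.

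Finally I would pass to the limit in $\la\el\otimes\la\el$. This is the compensated--compactness step, and it is easy here because $\la\el=\nabla(\la\vl)$ is curl-free: for $\varphi\in\mathcal D([0,T]\times\R^{3})$ supported in $[0,T]\times K$ and any indices $i,j$,
$$\Big|\int_{0}^{T}\!\!\int_{\R^{3}}(\la\el)_{i}\,(\la\el)_{j}\,\varphi\,dx\,dt\Big|=\Big|\int_{0}^{T}\big\langle(\la\el)_{i}(t),\,(\la\el)_{j}(t)\,\varphi(t)\big\rangle\,dt\Big|\lesssim_{\varphi}\ \|\la\el\|_{L^{2}_{t}H^{-1}(K)}\ \|\la\el\|_{L^{2}_{t}H^{1}(K)},$$
which tends to $0$ by the previous step; equivalently one may invoke the div--curl Lemma \ref{divcurl} with $u_{n}=v_{n}=\la\el$, using $\operatorname{curl}(\la\el)=0$ and the relative compactness of $\dive(\la\el)=\sls$, to get $|\la\el|^{2}\rightharpoonup0$, and then note that every weak-$*$ limit $M$ of the rank-one positive semidefinite matrices $\la\el\otimes\la\el$ is positive semidefinite with $\operatorname{tr}M=\lim|\la\el|^{2}=0$, hence $M=0$. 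Either way $\la\nabla\vl\otimes\la\nabla\vl\rightharpoonup0$ in $\mathcal D'([0,T]\times\R^{3})$, which is \eqref{e14}. The main obstacle is the content of the first two steps, namely securing the uniform-in-$\la$ time regularity of $\la\el$ that feeds the Lions--Aubin compactness; once that is in hand, the passage to the limit in the quadratic electric term is routine.
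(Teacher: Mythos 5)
Your proposal is correct, and it rests on exactly the same two pillars as the paper's proof: the Beam-equation Strichartz estimate \eqref{e12}, whose right-hand side is uniform in $\la$ and which (unlike the Klein--Gordon estimate \eqref{k7}, which only controls $\la\partial_{t}\sls$) gives an unscaled bound on $\partial_{t}\sls$, hence time regularity for $\la\el=\nabla\Delta^{-1}\sls$; and the spatial bound $\la\el\in L^{2}_{t}H^{1}_{x}$ coming from \eqref{e1}, \eqref{c4}, \eqref{c8}. Where you differ is in how the compactness is packaged at the end. The paper verifies the hypotheses L1--L3 of the generalized div--curl Lemma \ref{divcurl}: it upgrades the uniform bound on $\partial_{t}\sls$ to precompactness of $\dive(\la\el)$ and of $\la\el$ itself in $C([0,T];H^{-1}_{loc})$ (via the Lipschitz-in-time bound interpolated with the $L^{2}_{t}L^{2}_{x}$ energy bound on $\sls$), notes $\mathrm{curl}(\la\el)=0$, and invokes the lemma. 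You instead run Lions--Aubin (Lemma \ref{LA}) to get strong convergence $\la\el\to0$ in $L^{2}_{t}H^{-1}_{loc}$ and then kill the quadratic term by a direct $H^{-1}$--$H^{1}$ duality pairing against the $L^{2}_{t}H^{1}$ bound; this is a slightly more elementary closing step that bypasses the $C_{t}H^{-1}_{loc}$ compactness the div--curl lemma requires. A genuine bonus of your write-up is the final observation: Lemma \ref{divcurl} as stated only yields $|\la\el|^{2}\rightharpoonup0$ (the scalar product), and passing to the full tensor $\la\el\otimes\la\el$ needs either your positive-semidefinite/trace argument or your direct pairing; the paper states the tensor conclusion without making this small step explicit.
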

\begin{proof}
In order to prove \eqref{e14} we apply the generalized Div-curl Lemma \ref{divcurl} to the sequences $u_{n}=v_{n}=\lambda\nabla\vl=\la\el$. So we have to check that the hypotheses $L1-L3$ hold.
By combing together  \eqref{c1}, \eqref{e1} and \eqref{e12} we get that
\begin{equation}
\lambda\el\longrightarrow 0  \qquad \text{weak-$\ast$ in $L^{\infty}([0,T];L^{2}_{loc}(\R^{3}))$.}
\label{e15}
\end{equation}
Then, we observe that the hypothesis $L3$ is automatically fullfilled since $curl(\la \el)=0$. In order to verify the hypothesis $L2$ we see that by the Poisson equation
$$\dive(\la\el)=\lambda\nabla\vl=\sls.$$
By using \eqref{e12} we have $\partial_{t}\sls\in C(0,T;H^{-s}(\R^{3}))$, for any $s>1$, so $\sls$ is bounded in $Lip(0,T;H^{-s}(\R^{3}))$, $s>1$ which together with the energy bounds on $\sls$  in $L^{2}(0,T;L^{2}(\R^{3}))$, yields to  the precompactness of $\dive(\la \el)$ in 
$C([0,T];H^{-1}_{loc}(\R^{3}))$. In a similar way we fulfill the  hypothesis $L1$, by combing   $\lambda\el=\Delta^{-1/2}\sls$ and the bounds \eqref{e12}. Since $L1$-$L3$ holds we can conclude by using the Lemma \ref{divcurl} that
$$\lambda\nabla\vl\otimes\lambda\nabla\vl \rightharpoonup 0 \qquad \text{in $\mathcal{D}'([0,T]\times \R^{3})$}.$$
\end{proof}

\section{Proof of the Main Theorem \ref{tm1}}
\begin{itemize}
\item[{\bf(i)}] It follows from \eqref{c5} and \eqref{d2}.
\item[{\bf(ii)}] It follows from \eqref{m9}.
\item[{\bf(iii)}] It follows from \eqref{m77}.
\item[{\bf(iv)}] It follows from (ii) and (iii).
\item[{\bf(v)}] First of all we apply the Leray projector $\bf H$ to the momentum equation of the system \eqref{3.1}, then we have
\begin{equation}
\begin{split}
\partial_{t}{\bf H}(\rho^{\lambda}& u^{\lambda})+{\bf H}(\dive(\rho^{\lambda} u^{\lambda}\otimes u^{\lambda}))\\
&={\bf H}(\dive(\rho D(\ul)+\dive(\la\el\otimes\la\el)+(\rl-1)\nabla\Delta\rl).
\end{split}
\end{equation}
By using together \eqref{m77} and the Proposition \ref{pelectric} for any $\varphi\in \mathcal{D}([0,T]\times \R^{3})$ we obtain that
\begin{equation}
\langle \partial_{t}{\bf H}(\rl\ul)-{\bf H}\dive(\la\el\otimes\la\el),\varphi\rangle\longrightarrow \langle \partial_{t}{\bf H}u,\varphi\rangle.
\label{l1} 
\end{equation}
The convergence established in  \eqref{d2}  entails that for any $\varphi\in \mathcal{D}([0,T]\times \R^{3})$ 
\begin{equation}
\begin{split}
\langle{\bf H} ((\rl-1)\nabla\Delta\rl),\varphi\rangle=&-\langle\nabla (\rl-1)\Delta\rl,{\bf H}\varphi\rangle \\
&-\langle(\rl-1)\Delta\rl, \nabla {\bf H}\varphi\rangle \longrightarrow 0.
\end{split}
\label{l2}
\end{equation}
The convergence of the diffusive terms follows in the following way.
\begin{equation}
\begin{split}
\langle{\bf H}\dive(\rl D(\ul)),\varphi\rangle=&\langle\rl\ul,D(\nabla{\bf H}\varphi)\rangle+\langle\nabla\rl\cdot\ul, \nabla{\bf H}\varphi\rangle\\
&=\langle\rl\ul,D(\nabla{\bf H}\varphi)\rangle+2\langle\sqrt{\rl}\ul\nabla\sqrt{\rl}, \nabla{\bf H}\varphi\rangle\\
&\longrightarrow \langle {\bf H}(\Delta\ul),\varphi\rangle,
\label{l3}
\end{split}
\end{equation}
where we used \eqref{d44}, \eqref{m77} and \eqref{m9}.
For  the convergence of  the convective term is enough to notice that by (i) and (iii) we have that $\sqrt{\rl}$ and $\rl\ul$ converges almost everywhere hence 
\begin{equation}
\sqrt{\rl} \ul=\frac{\rl\ul}{\sqrt{\rl}} \longrightarrow u \quad \text{almost everywhere}.
\end{equation}
And, as a consequence
\begin{align}
\langle {\bf H}\dive(\rho^{\lambda} u^{\lambda}\otimes u^{\lambda}), \varphi\rangle\longrightarrow \langle {\bf H}\dive( u\otimes u),\varphi\rangle\label{l4}
\end{align}

So, by using together \eqref{l1}, \eqref{l2}, \eqref{l3}, \eqref{l4}  we have that $u={\bf H}u$ satisfies the following equation  in $\mathcal{D}'([0,T]\times \R^{3})$
$$
{\bf H}\Big(\partial_{t} u-\Delta u+(u\cdot\nabla)u\Big)=0.
$$
\end{itemize}

% \begin{acknowledgements}
% The authors wish to thank Professor Yann Brenier for fruitful discussion during the preparation of the paper.
% \end{acknowledgements}
 
\bibliographystyle{amsplain}
%\bibliography{don}

\end{document}